%2multibyte Version: 5.50.0.2953 CodePage: 1254
\documentclass{article}%
\usepackage{graphicx}
\usepackage{amsmath}
\usepackage{amsfonts}
\usepackage{amssymb}%
\setcounter{MaxMatrixCols}{30}
%TCIDATA{OutputFilter=latex2.dll}
%TCIDATA{Version=5.50.0.2953}
%TCIDATA{Codepage=1254}
%TCIDATA{CSTFile=LaTeX article (bright).cst}
%TCIDATA{Created=Tue Apr 06 14:54:02 2004}
%TCIDATA{LastRevised=Monday, May 29, 2023 12:40:36}
%TCIDATA{<META NAME="GraphicsSave" CONTENT="32">}
%TCIDATA{<META NAME="SaveForMode" CONTENT="1">}
%TCIDATA{BibliographyScheme=Manual}
%TCIDATA{<META NAME="DocumentShell" CONTENT="General\Blank Document">}
%TCIDATA{Language=American English}
%BeginMSIPreambleData
\providecommand{\U}[1]{\protect\rule{.1in}{.1in}}
%EndMSIPreambleData
\newtheorem{theorem}{Theorem}
{}

\newtheorem{case}{Case}

\newtheorem{corollary}{Corollary}

\newtheorem{lemma}{Lemma}
{}

\newtheorem{proposition}{Proposition}
\newtheorem{remark}{Remark}

\newenvironment{proof}[1][Proof]{\textbf{#1.} }{\ \rule{0.5em}{0.5em}}

\oddsidemargin 1.0cm \evensidemargin 1.0cm
\voffset -1cm
\topmargin 0.1cm
\headheight 0.5cm
\headsep 1.5cm
\begin{document}

\title{On the real spectrum of differential operators with PT-symmetric periodic
matrix coefficients}
\author{O. A. Veliev\\{\small Dogus University, \ Istanbul, Turkey.}\\\ {\small e-mail: oveliev@dogus.edu.tr}}
\date{}
\maketitle

\begin{abstract}
We study the spectrum of the differential operator $T$ generated by the
differential expression of order $n>2$ with the $m\times m$ PT-symmetric
periodic matrix coefficients. The case when $m$ and $n$ are the odd numbers
was investigated in [8]. In this paper, we consider the all remained cases:
\textbf{(a)} $n$ is an odd number and $m$ is an even number, \textbf{(b)} $n$
is an even number and $m$ is an arbitrary positive integer. We find conditions
on the coefficients under which in the cases \textbf{(a)} and \textbf{(b)} the
spectrum of $T$ contains the sets $(-\infty,H]$ $\cup\lbrack H,\infty)$ and
$[H,\infty)$ respectively for some $H>0.$

Key Words: Non-self-adjoint differential operator, PT-symmetric periodic
matrix coefficients. Real spectrum.

AMS Mathematics Subject Classification: 34L05, 34L20.

\end{abstract}

\section{Introduction and Preliminary Facts}

In this paper, we consider the spectrum $\sigma(T)$ of the differential
operator $T$ generated in the space $L_{2}^{m}(-\infty,\infty)$ by the
differential expression
\begin{equation}
(-i)^{n}y^{(n)}+(-i)^{n-2}P_{2}y^{(n-2)}+(-i)^{n-3}P_{3}y^{(n-3)}+...+P_{n}y,
\tag{1}%
\end{equation}
where $n>2,$ $P_{k}=(p_{k,i,j})$ for $k=2,3,...,n$ are the $m\times m$
matrices with the complex-valued PT-symmetric periodic entries
\begin{equation}
p_{k,i,j}\left(  x+1\right)  =p_{k,i,j}\left(  x\right)  ,\text{ }%
p_{k,i,j}\left(  -x\right)  =\overline{p_{k,i,j}\left(  x\right)  },\text{
}p_{k,i,j}\in L_{2}[0,1] \tag{2}%
\end{equation}
and $y=(y_{1},y_{2},...,y_{m})^{T}$ is a vector-valued function. Here
$L_{2}^{m}(a,b)$ for $-\infty\leq a<b\leq\infty$ is the space of the
vector-valued functions $f=\left(  f_{1},f_{2},...,f_{m}\right)  ^{T}$ with
the norm $\left\Vert \cdot\right\Vert _{(a,b)}$ and inner product
$(\cdot,\cdot)_{(a,b)}$ defined by%
\[
\left\Vert f\right\Vert _{(a,b)}^{2}=\int_{a}^{b}\left\vert f\left(  x\right)
\right\vert ^{2}dx,\text{ }(f,g)_{(a,b)}=\int_{a}^{b}\left\langle f\left(
x\right)  ,g\left(  x\right)  \right\rangle dx,
\]
where $\left\vert \cdot\right\vert $ and $\left\langle \cdot,\cdot
\right\rangle $ are the norm and inner product in $\mathbb{C}^{m}.$

It is well known that (see for example [3, 5]) the spectrum $\sigma(T)$ of $T$
is the union of the spectra $\sigma(T_{t})$ of the operators $T_{t}$ for
$t\in\lbrack0,2\pi)$ generated in $L_{2}^{m}[0,1]$ by (1) and the boundary
conditions
\begin{equation}
y^{(\mathbb{\nu})}\left(  1\right)  =e^{it}y^{(\mathbb{\nu})}\left(  0\right)
,\text{ }\mathbb{\nu}=0,1,...,(n-1). \tag{3}%
\end{equation}
The spectrum of $T_{t}$ consists of the eigenvalues. These eigenvalues are
known as Bloch eigenvalues of $T$ and are the roots of the characteristic
equation
\begin{equation}
\Delta(\lambda,t):=\det(Y_{j}^{(\nu-1)}(1,\lambda)-e^{it}Y_{j}^{(\nu
-1)}(0,\lambda))_{j,\nu=1}^{n}= \tag{4}%
\end{equation}%
\[
e^{inmt}+f_{1}(\lambda)e^{i(nm-1)t}+f_{2}(\lambda)e^{i(nm-2)t}+\cdot\cdot
\cdot+f_{nm-1}(\lambda)e^{it}+1,
\]
where $f_{1}(\lambda),f_{2}(\lambda),...$ are the entire functions,
$Y_{1}(x,\lambda),Y_{2}(x,\lambda),\ldots,Y_{n}(x,\lambda)$ are the solutions
of the matrix equation
\[
(-i)^{n}Y^{(n)}+(-i)^{n-2}P_{2}Y^{(n-2)}+(-i)^{n-3}P_{3}Y^{(n-3)}%
+...+P_{n}Y=\lambda Y
\]
satisfying $Y_{k}^{(j)}(0,\lambda)=O_{m}$ for $j\neq k-1$, $Y_{k}%
^{(k-1)}(0,\lambda)=I_{m}$. Here $O_{m}$ and $I_{m}$ are the $m\times m$ zero
and identity matrices (see [4, Chapter 3]).

Note that there are a large number of papers for the scalar case $m=1$ and
$n=2$, namely for the Schr\"{o}dinger operator (see the monographs [1,
Chapters 4 and 6] and [6, Chapters 3 and 5] and the papers they refer to). The
results and the method used in this paper are completely different from the
results and methods of those papers. Therefore we do not discuss the scalar
case in detail.

As far as I know, only the papers [7, 8] were devoted to the differential
operator with the periodic PT-symmetric matrix coefficients. In [7] the
Schr\"{o}dinger operator with a PT-symmetric periodic matrix potential was
investigated, where $n=2$. In [8] we considered the following case.

\begin{case}
$m$ and $n$ are the odd numbers.
\end{case}

We proved that in Case 1, $\sigma(T)$ contains all real line $\mathbb{R}.$ In
this paper, we consider the others and all the remained cases, namely the
following cases:

\begin{case}
$n$ is an odd number and $m$ is an even number.
\end{case}

\begin{case}
$n$ is an even number and $m$ is an arbitrary positive integer.
\end{case}

Therefore, this paper can be considered as a continuation and completion of
the paper [8]. Moreover, the method used in [8] for Case 1 can note be used
for Cases 2 and 3, since the method of Case 1 passes through only if $nm$ is
an odd number. That is why, the methods used in [8] and in this paper are
completely different.

The paper is organized as follows. To study the spectrum of $T_{t},$ we
consider the family of the operators $T_{t}(\varepsilon,C)$ generated by the
differential expression
\begin{equation}
(-i)^{n}y^{n}+(-i)^{n-2}Cy^{(n-2)}+\varepsilon\left(  (-i)^{n-2}%
(P_{2}-C)y^{(n-2)}+%
%TCIMACRO{\tsum \limits_{l=3}^{n}}%
%BeginExpansion
{\textstyle\sum\limits_{l=3}^{n}}
%EndExpansion
(-i)^{n-l}P_{l}y^{(n-l)}\right)  \tag{5}%
\end{equation}
and boundary conditions (3), where $\varepsilon\in\lbrack0,1],$ $C=\int
_{0}^{1}P_{2}\left(  x\right)  dx,$ $T_{t}(1,C)=T_{t}$ and $T_{t}%
(0,C)=:T_{t}(C)$ is the operator generated by the expression
\[
(-i)^{n}y^{(n)}+(-i)^{n-2}Cy^{(n-2)}%
\]
and boundary conditions (3). Thus $T_{t}(C)$ and $T_{t}(\varepsilon
,C)-T_{t}(C)$ are respectively the unperturbed operator and perturbation. We
prove that the large eigenvalues of $T_{t}(\varepsilon,C)$ are located in the
small neighborhood of the eigenvalues of $T_{t}(C).$ Therefore, first of all,
let us analyze the eigenvalues and eigenfunction of the operator $T_{t}(C)$.
Using (2) one can easily verify that the entries of the matrix $C$ are the
real numbers. Therefore, the eigenvalues of the matrix $C$ consist of the real
eigenvalues and the pairs of the conjugate complex numbers. The distinct
eigenvalues of $C$ are denoted by $\mu_{1},\mu_{2},...,\mu_{p}.$ If the
multiplicity of $\mu_{j}$ is $m_{j},$ then
\begin{equation}
m_{1}+m_{2}+...+m_{p}=m. \tag{6}%
\end{equation}
Without loss of generality, we denote the real eigenvalues by $\mu_{1}<\mu
_{2}<...<\mu_{s}$ and the nonreal eigenvalues by$\ \mu_{s+1},\mu_{s+2}%
,...,\mu_{p}.$ One can easily verify that the eigenvalues and eigenfunctions
of $T_{t}(C)$ are respectively $\ $%
\begin{equation}
\mu_{k,j}(t)=\left(  2\pi k+t\right)  ^{n}+\mu_{j}\left(  2\pi k+t\right)
^{n-2} \tag{7}%
\end{equation}
and
\begin{equation}
\Phi_{k,j,l,t}(x)=u_{j,l}e^{i\left(  2\pi k+t\right)  x} \tag{8}%
\end{equation}
for $k\in\mathbb{Z},$ $j=1,2,...,p$ and $l=1,2,...,l_{j\text{ }},$ where
$u_{j,1},$ $u_{j,2},...u_{j,l_{j}}$ are the linearly independent eigenvectors
corresponding to the eigenvalue $\mu_{j}$ of $C.$ Therefore $\sigma\left(
T(C)\right)  $ consists of the lines and half lines, respectively if $n$ is an
odd and even number. To see the difference of the investigations in Cases 2
and 3 let us discuss the exceptional points of $\sigma(T(C)),$ since the
perturbation $T(\varepsilon,C)-T(C)$ for the small values of $\varepsilon$ may
generate the gaps in $\sigma\left(  T(C)\right)  $ only at the neighborhoods
of the exceptional Bloch eigenvalues. Note that the exceptional points of
$\sigma\left(  T(C)\right)  $ are the points $\mu_{k,j}(t_{0})\in\sigma\left(
T(C)\right)  ,$ where the multiplicity of the eigenvalues $\mu_{k,j}(t)$
varies in any neighborhood of $t_{0}$. In Proposition 1$(a)$, we prove that if
$n$ is an odd number, then the multiplicity of the eigenvalues $\mu_{k,j}(t)$
is equal to $m_{j}$ for all $t\in\mathbb{R}$. Therefore, in Case 2
$\sigma\left(  T(C)\right)  $ has no exceptional points. This situation
simplifies the study of Case 2. In this case we prove that if the matrix $C$
has at least one real eigenvalue of odd multiplicity, then $\sigma(T)$
contains the set $(-\infty,-H]\cup\lbrack H,\infty)$ for some $H\geq0$.
However, if $n$ is an even number, then there exist the points $t\in
\lbrack0,2\pi)$ (see Proposition 1$(b)$) such that $\mu_{k,j}(t)=\mu_{l,i}(t)$
for some $(l,i)\neq(k,j),$ that is, the multiplicity of the eigenvalues
$\mu_{k,j}(t)$ varies. Therefore, in this case, $\sigma\left(  T(C)\right)  $
may have infinitely many exceptional points. This situation complicate the
investigation of Case 3. In this case we prove that if the matrix $C$ has at
least three real eigenvalues of odd multiplicity satisfying some conditions
(see (27)), then $\sigma(T)$ contains the set $[H,\infty)$ for some $H\geq0$.
Fortunately, the investigations [7] of the case $n=2$ helps us to consider
this complicated case.

Note that we use the following theorem, which can be proved by repeating the
proof of the Theorem 1 of [7].

\begin{theorem}
If $\lambda$ is an eigenvalue of multiplicity $v$ of the operator $T_{t},$
then $\overline{\lambda}$ is also an eigenvalue of the same multiplicity of
$T_{t}$
\end{theorem}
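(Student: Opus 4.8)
The plan is to exhibit an antilinear involution $J$ on $L_{2}^{m}[0,1]$ that intertwines $T_{t}$ with itself while sending the spectral parameter $\lambda$ to $\overline{\lambda}$. Define $(Jf)(x)=\overline{f(1-x)}$. Since $J^{2}=\mathrm{id}$ and $J$ is a conjugate-linear isometry, it is a bijection of $L_{2}^{m}[0,1]$, so it suffices to show that $J$ carries the root subspace of $T_{t}$ at $\lambda$ onto the root subspace at $\overline{\lambda}$; equality of dimensions then yields equality of the multiplicities.

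First I would verify the intertwining on the level of the differential expression (1). Writing $L$ for that expression and setting $z=Jy$, so that $z^{(\nu)}(x)=(-1)^{\nu}\overline{y^{(\nu)}(1-x)}$, the PT-symmetry (2) together with periodicity gives $P_{l}(x)=\overline{P_{l}(-x)}=\overline{P_{l}(1-x)}$. Combining this with the identity $(-i)^{n-l}(-1)^{n-l}=i^{n-l}=\overline{(-i)^{n-l}}$ for each term, every summand of $L[z](x)$ equals the conjugate of the corresponding summand of $Ly$ evaluated at $1-x$; hence $L[z](x)=\overline{(Ly)(1-x)}=(JLy)(x)$, i.e. $LJ=JL$. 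In particular, if $Ly=\lambda y$ then $Lz=\overline{\lambda}\,z$. The decisive point is that the reflection $x\mapsto 1-x$ (rather than $x\mapsto -x$) stays inside $[0,1]$ and is compatible with the period-one PT-symmetry, so no extension of $y$ beyond the fundamental interval is needed.

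Next I would check that $J$ preserves the boundary conditions (3) with the \emph{same} quasimomentum $t$. From $z^{(\nu)}(1)=(-1)^{\nu}\overline{y^{(\nu)}(0)}$, $z^{(\nu)}(0)=(-1)^{\nu}\overline{y^{(\nu)}(1)}$ and $y^{(\nu)}(1)=e^{it}y^{(\nu)}(0)$ one gets $\overline{y^{(\nu)}(1)}=e^{-it}\overline{y^{(\nu)}(0)}$, whence $z^{(\nu)}(1)=e^{it}z^{(\nu)}(0)$. Thus $z$ lies in the domain of $T_{t}$ and $T_{t}J=JT_{t}$ holds as operators.

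Finally I would pass from this intertwining to equality of multiplicities. Because $J$ is antilinear, $J\lambda=\overline{\lambda}J$, so $T_{t}J=JT_{t}$ yields $(T_{t}-\overline{\lambda})^{k}J=J(T_{t}-\lambda)^{k}$ for every $k$; therefore $J$ restricts to a conjugate-linear bijection between $\ker(T_{t}-\lambda)^{k}$ and $\ker(T_{t}-\overline{\lambda})^{k}$, and these subspaces have equal dimension. Choosing $k$ large enough that they stabilize to the root subspaces shows that $\overline{\lambda}$ is an eigenvalue of $T_{t}$ of the same algebraic multiplicity $v$, equivalently that $\lambda$ and $\overline{\lambda}$ are roots of $\Delta(\cdot,t)$ of the same order. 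The main obstacle is the bookkeeping in the intertwining computation: one must track the factor $(-1)^{\nu}$ produced by differentiating $y(1-x)$ against the conjugation of the coefficient matrices, so that the powers of $-i$ in (1) conjugate correctly; this is exactly where the cancellation $i^{n-l}=\overline{(-i)^{n-l}}$ and the PT-symmetry of the $P_{l}$ combine, and it works uniformly for all parities of $n$ and $m$.
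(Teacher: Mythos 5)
Your proof is correct, and every step checks out: with $(Jf)(x)=\overline{f(1-x)}$ one indeed has $z^{(\nu)}(x)=(-1)^{\nu}\overline{y^{(\nu)}(1-x)}$, periodicity plus (2) give $P_{l}(x)=\overline{P_{l}(1-x)}$, and the cancellation $(-1)^{n-l}(-i)^{n-l}=i^{n-l}=\overline{(-i)^{n-l}}$ makes each summand of (1) conjugate correctly; the boundary computation $z^{(\nu)}(1)=e^{it}z^{(\nu)}(0)$ (using that $t$ is real, so $\overline{e^{it}}=e^{-it}$) shows $J$ preserves the domain of $T_{t}$ with the \emph{same} quasimomentum, and the antilinear intertwining $(T_{t}-\overline{\lambda}I)^{k}J=J(T_{t}-\lambda I)^{k}$, together with the fact that a conjugate-linear bijection maps complex subspaces to complex subspaces of equal dimension, gives equality of algebraic multiplicities; the stabilization of $\ker(T_{t}-\lambda)^{k}$ is legitimate since $T_{t}$ has compact resolvent, hence finite ascent at each eigenvalue. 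Note, however, that the paper proves nothing here: it states only that the theorem "can be proved by repeating the proof of Theorem 1 of [7]", where the $n=2$ matrix Schr\"odinger case is treated, and that argument is carried through the fundamental solutions and the characteristic determinant $\Delta(\lambda,t)$ of (4) — PT-symmetry of the coefficients produces a conjugation symmetry of $\Delta$, and the multiplicity claim is read off from the order of $\lambda$ as a zero of $\Delta(\cdot,t)$, using the standard identification of that order with the algebraic multiplicity. So your route is genuinely different in packaging, even though the underlying mechanism (the PT operation sends $\lambda$-eigenfunctions to $\overline{\lambda}$-eigenfunctions without moving $t$) is the same: your explicit involution — reflection through $x=1/2$ composed with conjugation, i.e., the PT operation transported to the fundamental interval so that no extension of $y$ beyond $[0,1]$ is needed — yields a self-contained proof, uniform in the parities of $n$ and $m$, that handles algebraic multiplicity directly via root subspaces and never needs the determinant-order equivalence (which your closing sentence correctly treats as an aside rather than an ingredient). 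What the determinant route buys instead is an identity for $\Delta(\lambda,t)$ itself, which is the kind of global analytic information the paper exploits elsewhere (e.g., in the continuity-in-$t$ arguments following Theorem 3), at the cost of more bookkeeping than your operator-level argument.
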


To formulate the following theorem, which is essentially used in this paper,
we introduce the following notation. Denote by $u_{j,l,1},$ $u_{j,l,2}%
,...u_{j,l,r_{j,l}-1}$ the associated vectors corresponding to the eigenvector
$u_{j,l},$ such that $\left(  C-\mu_{j}I\right)  u_{j,l,q}=u_{j,l,q-1}$ for
$q=1,2,...,r_{j,l}-1,$ where $u_{j,l,0}=u_{j,l}$ and recall that $u_{j,1},$
$u_{j,2},...u_{j,l_{j}}$ are the linearly independent eigenvectors
corresponding to the eigenvalue $\mu_{j}$ of $C$. Then it is not hard to
verify that
\[
\left(  T_{t}(C)-\mu_{k,j}(t)I\right)  u_{j,l,q}e^{i\left(  2\pi k+t\right)
x}=u_{j,l,q-1}e^{i\left(  2\pi k+t\right)  x}%
\]
and
\begin{equation}
r_{j,1}+r_{j,2}+...+r_{j,l_{j}}=m_{j}, \tag{9}%
\end{equation}
where $m_{j}$ is the multiplicity of the eigenvalue $\mu_{j}.$ It means that
the associated functions of $T_{t}(C)$ corresponding to the eigenfunction
$\Phi_{k,j,l,t}(x)=u_{j,l}e^{i\left(  2\pi k+t\right)  x}$ are%
\begin{equation}
\Phi_{k,j,l,q,t}(x)=u_{j,l,q}e^{i\left(  2\pi k+t\right)  x} \tag{10}%
\end{equation}
for $q=1,2,...,r_{j,l}-1.$ Thus the dimension of the space generated by the
functions (8) and (10) is the multiplicity $m_{j}$ of the eigenvalue $\mu
_{j},$ due to (9).

\begin{theorem}
\textit{ There exist positive numbers }$N$ and $c$ such that \textit{the large
eigenvalues of }$T_{t}(\varepsilon,C)$ \textit{lie in the disks }%
\[
U_{\varepsilon_{k}}(\mu_{k,j}(t)):=\left\{  \lambda\in\mathbb{C}:\left\vert
\lambda-\mu_{k,j}(t)\right\vert <\varepsilon_{k}\right\}
\]
\textit{for }$\left\vert k\right\vert \geq N$ and $j=1,2,...,p,$\textit{ where
}$\varepsilon_{k}=c\left(  \mid k\mid^{n-3}\right)  ^{1/r}$ if $n$ is an odd
number,\textit{ }%
\[
\varepsilon_{k}=c\left(  \left(  \mid k^{-1}\mid+q_{k}\right)  \left\vert
k\right\vert ^{n-2}\right)  ^{1/r},
\]
if $n$ is an even number, $r=\max\limits_{j=1,2,...,p}\left\{  r_{j,1}%
,r_{j,2},...,r_{j,l_{j}}\right\}  ,$ $t\in\lbrack-1,2\pi-1)$\textit{,
}$\varepsilon\in\lbrack0,1]$, \textit{ }%
\[
q_{k}=\max\left\{  \left\vert p_{2,i,j,l}\right\vert :i,j=1,2,...,m;\text{
}l=\pm2k,\pm(2k+1)\right\}
\]
and $p_{2,i,j,l}=\int\nolimits_{[0,1]}p_{2,i,j}(x)e^{-2\pi ilx}dx.$
\end{theorem}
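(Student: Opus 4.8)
The plan is to regard $T_{t}(\varepsilon,C)=T_{t}(C)+\varepsilon B$, where $B$ is the $\varepsilon$-independent expression $(-i)^{n-2}(P_{2}-C)y^{(n-2)}+\sum_{l=3}^{n}(-i)^{n-l}P_{l}y^{(n-l)}$ from (5), and to localise its large eigenvalues by expanding in the eigen- and associated functions (8), (10) of the explicitly solvable operator $T_{t}(C)$. First I would fix an eigenpair $(\lambda,\Psi)$ of $T_{t}(\varepsilon,C)$ with large $|\lambda|$ and, using the biorthogonal system $\Phi^{*}_{k,j,l,q,t}$ of adjoint eigen/associated functions, pair the equation $(T_{t}(C)-\lambda)\Psi=-\varepsilon B\Psi$ with $\Phi^{*}_{k,j,l,q,t}$ to obtain the basic identity
\[
\big(\mu_{k,j}(t)-\lambda\big)\,(\Psi,\Phi^{*}_{k,j,l,q,t})+(\Psi,\Phi^{*}_{k,j,l,q+1,t})=-\varepsilon\big(B\Psi,\Phi^{*}_{k,j,l,q,t}\big),
\]
for $q=0,1,\dots,r_{j,l}-1$ (with $\Phi^{*}_{k,j,l,r_{j,l},t}=0$). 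Iterating this recursion along the Jordan chain eliminates the intermediate coefficients and expresses the power $\big(\lambda-\mu_{k,j}(t)\big)^{r_{j,l}}$ through the perturbation functionals $\varepsilon(B\Psi,\Phi^{*}_{k,j,l,q,t})$; choosing $(k,j,l)$ so that $(\Psi,\Phi^{*}_{k,j,l,0,t})$ is the largest such pairing selects the dominant cluster, and this chain mechanism is what produces the exponent $1/r$ with $r=\max_{j,l}r_{j,l}$.

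The second step is to estimate the functionals $(B\Psi,\Phi^{*}_{k,j,l,q,t})$. Expanding $\Psi=\sum_{k'}\psi_{k'}e^{i(2\pi k'+t)x}$, differentiation $(n-l)$ times contributes the factor $(2\pi k'+t)^{n-l}$ while multiplication by $P_{l}$ contributes its Fourier coefficients $p_{l,i,j,k-k'}$. The dominant (diagonal, $k'=k$) contribution of the top-order term $(-i)^{n-2}(P_{2}-C)y^{(n-2)}$ is proportional to $(2\pi k+t)^{n-2}$ times $\int_{0}^{1}(P_{2}-C)\,dx=O_{m}$, and therefore \emph{vanishes} because $C=\int_{0}^{1}P_{2}$. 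The surviving diagonal contribution thus comes from $(-i)^{n-3}P_{3}y^{(n-3)}$ and is of order $|k|^{n-3}$, the terms with $l\geq4$ being smaller. Feeding this into the Jordan recursion and choosing $N$ so large that the non-dominant modes $k'$ (those with $\mu_{k',i}(t)$ far from $\lambda$) contribute only lower-order terms, uniformly in $t\in[-1,2\pi-1)$ and $\varepsilon\in[0,1]$, yields $|\lambda-\mu_{k,j}(t)|<c(|k|^{n-3})^{1/r}$ when $n$ is odd. Oddness is used decisively here: since $(2\pi k+t)^{n}$ distinguishes the two signs of $2\pi k+t$, no clusters $\mu_{k,j}(t)$ and $\mu_{k',i}(t)$ with $k'\neq k$ collide for large $|k|$, so there is a single dominant cluster and no resonant off-diagonal coupling.

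When $n$ is even this fails: $(2\pi k+t)^{n}=(2\pi k'+t)^{n}$ admits $2\pi k'+t\approx-(2\pi k+t)$, so $k+k'$ is a bounded integer and, for $t\in[-1,2\pi-1)$, the resonant partners of $k$ are exactly the $k'$ with $k-k'\in\{\pm2k,\pm(2k+1)\}$. The off-diagonal contribution of $(P_{2}-C)y^{(n-2)}$ coupling $k$ to such a $k'$ is then $(2\pi k'+t)^{n-2}$ times a Fourier coefficient $p_{2,i,j,k-k'}$, i.e. of order $q_{k}|k|^{n-2}$; combined with the diagonal order $|k|^{n-3}=|k^{-1}|\,|k|^{n-2}$ term and passed through the Jordan recursion, this produces the radius $\varepsilon_{k}=c\big((|k^{-1}|+q_{k})|k|^{n-2}\big)^{1/r}$. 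I expect the main obstacle to be precisely this even-$n$ resonance: one must isolate the finitely many resonant modes (the indices entering $q_{k}$), control the possibly non-semisimple coupling between the two colliding clusters through the Jordan recursion, and verify that after removing all remaining modes the perturbation of the resonant block is governed exactly by $q_{k}|k|^{n-2}$, uniformly in $t$ and $\varepsilon$. The semisimple case $r=1$ serves as a warm-up, and the analysis of $n=2$ in [7] guides the treatment of the resonant block.
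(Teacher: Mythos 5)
Your proposal follows essentially the same route as the paper's Appendix proof: the decomposition $T_{t}(\varepsilon,C)=T_{t}(C)+\varepsilon B$, pairing the eigenvalue equation with the adjoint eigen- and associated functions $\Phi_{d,u,l,q,t}^{\ast}$ to get the Jordan-chain recursion (the paper's (35)) that produces the exponent $1/r$, the cancellation $\int_{0}^{1}(P_{2}-C)\,dx=O_{m}$ killing the diagonal $|k|^{n-2}$ term so that odd $n$ yields $O(|k|^{n-3})$, and the even-$n$ resonance set $\{k,-k,-k-1\}$ whose off-diagonal coupling through the Fourier coefficients $p_{2,i,j,\pm2k},p_{2,i,j,\pm(2k+1)}$ gives the $q_{k}|k|^{n-2}$ contribution. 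The one step your sketch compresses --- the uniform lower bound on the dominant pairing $(\Psi_{\lambda(t,\varepsilon)},\Phi_{d,u,l,q,t}^{\ast})$ for some $d\in A(k,t)$, which the paper proves in its Lemma 2 via the identity (38), Bessel/Parseval for the basis (39), the a priori estimate $|\Psi_{\lambda(t,\varepsilon)}^{(\nu)}|\leq c_{5}|k|^{\nu}$ from [9], and the completeness of the root vectors of $C^{\ast}$ --- is exactly what your ``largest pairing'' device stands in for, so the approaches coincide.
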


Theorems analogous to Theorem 2 in the cases: \textbf{(i)} $n=2$ and
\textbf{(ii)} $T_{t}$ is a self-adjoint operator were proved in [7] and [9],
respectively. Since these cases do not cover the operator $T_{t}$, we cannot
directly refer to these papers. However, the proof of this theorem is similar
to the proofs of the corresponding Theorems 3 and 5 in [7] and [9],
respectively. Therefore, we present the proof of Theorem 2 in the Appendix.

\section{Main Results}

First, let us consider the eigenvalues and root functions of $T_{t}(C)$ for
$t\in\lbrack0,2\pi).$

\begin{proposition}
Let $\mu_{j}$ be an eigenvalue of $C$ of multiplicity $m_{j}.$

$(a)$ If $n$ is an odd number, then the multiplicity of the eigenvalue
$\mu_{k,j}(t)$ of $T_{t}(C)$ is $m_{j}$ for all $k\in\mathbb{Z}$ and
$t\in\lbrack0,2\pi),$ where $\mu_{k,j}(t)$ is defined in (7).

$(b)$ If $n$ is an even number, then the multiplicity of $\mu_{k,j}(t)$ is
$m_{j}$ for $t\in\lbrack0,2\pi)\backslash A(k,j),$ where
\begin{equation}
A(k,j)=%
%TCIMACRO{\tbigcup \limits_{l\in\mathbb{Z},i=1,2,...,p}}%
%BeginExpansion
{\textstyle\bigcup\limits_{l\in\mathbb{Z},i=1,2,...,p}}
%EndExpansion
\left\{  t_{l,i,q}:q=1,2,...,n\right\}  \tag{11}%
\end{equation}
and $t_{l,i,1},$ $t_{l,i,2},...,t_{l,i,n}$ are the roots of the equation%
\[
\left(  2\pi k+t\right)  ^{n}+\mu_{j}\left(  2\pi k+t\right)  ^{n-2}=\left(
2\pi l+t\right)  ^{n}+\mu_{i}\left(  2\pi l+t\right)  ^{n-2}.
\]

\end{proposition}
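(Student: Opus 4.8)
The plan is to diagonalize $T_{t}(C)$ against the quasi-periodic exponential basis and thereby reduce the whole question to an eigenvalue-collision count for the scalar quantities $\mu_{k,j}(t)$. First I would observe that the functions $e^{i(2\pi k+t)x}$, $k\in\mathbb{Z}$, satisfy the boundary conditions (3) and form a complete orthogonal system in $L_{2}[0,1]$; hence $L_{2}^{m}[0,1]$ is the orthogonal direct sum of the $m$-dimensional subspaces $V_{k}:=\{v\,e^{i(2\pi k+t)x}:v\in\mathbb{C}^{m}\}$. Each $V_{k}$ is invariant under $T_{t}(C)$, and a direct computation using $((-i)i)^{n}=((-i)i)^{n-2}=1$ shows that on $V_{k}$ the operator acts as the constant matrix $M_{k}(t)=(2\pi k+t)^{n}I_{m}+(2\pi k+t)^{n-2}C$. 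This is the structural heart of the argument: $T_{t}(C)$ is unitarily equivalent to $\bigoplus_{k}M_{k}(t)$, so its generalized eigenspace at any $\lambda$ is the direct sum of those of the blocks $M_{k}(t)$.

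Next I would read off the Jordan structure of each block. Since $M_{k}(t)=aI_{m}+bC$ with $a=(2\pi k+t)^{n}$ and $b=(2\pi k+t)^{n-2}$, and since $b\neq0$ whenever $2\pi k+t\neq0$, the matrix $M_{k}(t)$ has exactly the eigenvectors, associated vectors $u_{j,l,q}$, and multiplicities of $C$; its eigenvalue attached to $\mu_{j}$ is $a+b\mu_{j}=\mu_{k,j}(t)$ with algebraic multiplicity $m_{j}$. Consequently the algebraic multiplicity of a number $\lambda$ as an eigenvalue of $T_{t}(C)$ equals $\sum_{(k,j):\,\mu_{k,j}(t)=\lambda}m_{j}$, so the multiplicity of $\mu_{k,j}(t)$ is exactly $m_{j}$ if and only if no pair $(l,i)\neq(k,j)$ satisfies the collision identity $\mu_{l,i}(t)=\mu_{k,j}(t)$. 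The single degenerate point $2\pi k+t=0$, where $b=0$, must be excluded or treated on its own.

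Part $(b)$ is then a matter of unwinding definitions: the set $A(k,j)$ in (11) is by construction the union of the roots of all the equations $\mu_{k,j}(t)=\mu_{l,i}(t)$ with $(l,i)\neq(k,j)$, i.e. precisely the set of collision values of $t$. Hence for $t\in[0,2\pi)\setminus A(k,j)$ no collision occurs and the preceding paragraph yields multiplicity $m_{j}$.

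For part $(a)$ the task is to show that for $n$ odd the collision equation $(2\pi k+t)^{n}+\mu_{j}(2\pi k+t)^{n-2}=(2\pi l+t)^{n}+\mu_{i}(2\pi l+t)^{n-2}$ has no real solution $t$ with $(l,i)\neq(k,j)$. I would split according to the nature of $\mu_{j},\mu_{i}$. Writing $a=2\pi k+t$ and $b=2\pi l+t$ (both real) and using that $n-2$ is odd, so that $\operatorname{Im}\mu_{k,j}(t)=\operatorname{Im}(\mu_{j})\,a^{n-2}$, a comparison of imaginary parts immediately rules out a collision between a real and a nonreal eigenvalue and constrains the conjugate-pair case to $a=-b$ (since $x\mapsto x^{n-2}$ is injective on $\mathbb{R}$). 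The remaining case of two real eigenvalues reduces, after cancelling the common factor $a-b$, to a polynomial equation in $t$ whose real solvability is governed by the parity of $n$ together with the lattice constraint $b-a\in2\pi\mathbb{Z}\setminus\{0\}$. The main obstacle is exactly this last step: one must establish that the oddness of $n$ (equivalently, that $g_{\mu}(a)=a^{n}+\mu a^{n-2}$ fails to be even, in contrast to the even case where $g_{\mu}(a)=g_{\mu}(-a)$ forces the nonempty exceptional set of $(b)$) prevents two lattice points a distance $2\pi|l-k|$ apart from sharing a value. I expect this monotonicity and sign analysis to be the delicate part of the proof and the genuine point of divergence between Cases 2 and 3.
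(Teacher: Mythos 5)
Your structural reduction is the same as the paper's: the paper's entire proof consists of the observation (via (8)--(10)) that if $\mu_{k,j}(t)\neq\mu_{l,i}(t)$ for all $(l,i)\neq(k,j)$ --- condition (12) --- then the multiplicity of $\mu_{k,j}(t)$ is $m_{j}$, followed by the assertion, read off from (7), that for odd $n$ condition (12) holds for every $t\in[0,2\pi)$ while for even $n$ it holds off $A(k,j)$. Your block decomposition $T_{t}(C)\cong\bigoplus_{k}M_{k}(t)$ with $M_{k}(t)=(2\pi k+t)^{n}I_{m}+(2\pi k+t)^{n-2}C$ is that same fact made explicit (and somewhat more careful than the paper, e.g.\ in flagging the degenerate point $2\pi k+t=0$, where $M_{k}(t)$ collapses and the block multiplicity is $m$ regardless of the Jordan structure of $C$), and your part $(b)$ is the paper's argument verbatim: $A(k,j)$ is by construction the collision set.

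The genuine gap is in part $(a)$: you never prove the no-collision claim in the case of two real eigenvalues of $C$; you only announce that you ``expect'' the monotonicity and sign analysis to go through. In fact that step cannot be closed in the stated generality, because the claim is false for small $|k|$. Take $n=3$ and $i=j$ with $\mu_{j}$ real: after cancelling the factor $a-b$ the collision equation becomes $a^{2}+ab+b^{2}=-\mu_{j}$ with $a=2\pi k+t$, $b=2\pi l+t$; along $a-b=2\pi$ the left-hand side has minimum $\pi^{2}$, so whenever $\mu_{j}\leq-\pi^{2}$ (e.g.\ $C=-10\,I_{2}$) there is $t\in[0,2\pi)$ with $\mu_{0,j}(t)=\mu_{-1,j}(t)$, and the multiplicity there is $2m_{j}$; similarly, at $k=0$, $t=0$ all branches collide at $\mu_{0,j}(0)=0$, since constant vectors lie in the kernel of $T_{0}(C)$. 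What is true --- and is all the paper ever uses, since Theorem 3 concerns only $|k|\geq N$ --- is the large-$|k|$ version, and for that no delicate sign analysis is needed: for odd $n$ the leading terms cannot cancel, i.e.\ for any $l\neq k$ one has $\left\vert(2\pi k+t)^{n}-(2\pi l+t)^{n}\right\vert\geq c\,|k|^{n-1}$ uniformly in $t$ (if $a$ and $b$ have the same sign, factor $a^{n}-b^{n}$ and use that all terms of the cofactor are nonnegative since $n-1$ is even; if opposite signs, $a^{n}$ and $b^{n}$ themselves have opposite signs because $n$ is odd), while the $\mu$-terms are $O(|k|^{n-2})$; this is precisely estimate (14) used in the proof of Theorem 3$(a)$. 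For even $n$ and $l\in\{-k,-k-1\}$ the leading terms nearly cancel, which is exactly what produces the exceptional set $A(k,j)$. So your instinct that the parity step is the crux was sound --- the paper itself dispatches it with the unproved phrase ``by (7)'' --- but as submitted your proposal leaves part $(a)$ unestablished, and the plan of verifying it for all $k\in\mathbb{Z}$ by a sign analysis would have failed; the correct repair is to restrict to $|k|$ large and invoke the growth estimate above.
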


\begin{proof}
It follows from (8)-(10) that, if
\begin{equation}
\mu_{k,j}(t)\neq\mu_{l,i}(t) \tag{12}%
\end{equation}
for $(l,i)\neq(k,j),$ then $\mu_{k,j}(t)$ is an eigenvalue of $T_{t}(C)$ of
the multiplicity $m_{j}.$ On the other hand, by (7), if $n$ is an odd and even
number respectively, then (12) holds for all $t\in\lbrack0,2\pi)$ and
$t\in\lbrack0,2\pi)\backslash A(k,j).$ Therefore, the proposition is true.
\end{proof}

Thus the spectrum $\sigma\left(  T(C)\right)  $ has no exceptional points if
$n$ is an odd number, while the spectrum $\sigma\left(  T(C)\right)  $ has
infinitely many exceptional points if $n$ is an even number. Moreover, the set
of $t\in\lbrack0,2\pi)$ for which $\mu_{k,j}(t)$ are the exceptional points
has the accumulation points $0,\pi$ and $2\pi.$ Since $T_{t}=T_{t+2\pi},$
sometimes, instead of $t\in\lbrack0,2\pi)$ we use $t\in\lbrack-h,2\pi-h)$ for
some $h\in(0,\pi)$ in order to get two accumulation points. Note that in
Theorem 2 and Proposition 1 one can replace $[-1,2\pi-1)$ and $[0,2\pi)$ by
$[-h,2\pi-h).$

Now consider the large eigenvalues of $T_{t}$, by using Proposition 1, Theorem
2 and the notation $a_{k}\asymp b_{k}$ which means that there exist constants
$c_{1},$ $c_{2}$ and $c_{3},$ independent of $t$ and $\varepsilon,$ such that
$c_{1}|a_{k}|<\left\vert b_{k}\right\vert <c_{2}|a_{k}|$ for all $\left\vert
k\right\vert >c_{3}.$ Note that in the forthcoming inequalities we denote by
$c_{1},$ $c_{2},...$ positive constants independent of $t$ and $\varepsilon.$

\begin{theorem}
Let $\mu_{j}$ be an eigenvalue of $C$ of multiplicity $m_{j}.$

$(a)$ If $n$ is an odd number, then the operator $T_{t}$ has only $m_{j}$
eigenvalues lying in $U_{\varepsilon_{k}}(\mu_{k,j}(t))$ \textit{for
}$\left\vert k\right\vert \geq N$ and $t\in\lbrack-h,2\pi-h),$ where $N$ and
$\varepsilon_{k}$ are defined in Theorem 2 and $h\in(0,\pi).$

$(b)$ If $n$ is an even number, then there exists $\delta_{k}\asymp\left(
\varepsilon_{k}+\varepsilon_{-k}+\varepsilon_{-k-1}\right)  k^{1-n}$ such that
the operator $T_{t}$ has only $m_{j}$ eigenvalues lying in $U_{\varepsilon
_{k}}(\mu_{k,j}(t))$ \textit{for }$\left\vert k\right\vert \geq N$ and
\begin{equation}
t\in\lbrack-h,2\pi-h)\backslash U_{\delta_{k}}\left(  A(k,j)\right)  ,\tag{13}%
\end{equation}
where $U_{\delta_{k}}\left(  E\right)  $ denotes the open $\delta_{k}$
neighborhood of the set $E$ and $A(k,j)$ is defined in (11).
\end{theorem}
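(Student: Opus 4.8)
The plan is to view $T_t=T_t(1,C)$ as the endpoint of the family $T_t(\varepsilon,C)$, $\varepsilon\in[0,1]$, and to count eigenvalues inside the circle $\Gamma_{k,j}(t)=\{\lambda:|\lambda-\mu_{k,j}(t)|=\varepsilon_k\}$ by the argument principle. Writing $\Delta(\lambda,t,\varepsilon)$ for the characteristic determinant of $T_t(\varepsilon,C)$ (entire in $\lambda$ and continuous in $\varepsilon$), I would set
\[
n_{k,j}(\varepsilon)=\frac{1}{2\pi i}\oint_{\Gamma_{k,j}(t)}\frac{\partial_\lambda\Delta(\lambda,t,\varepsilon)}{\Delta(\lambda,t,\varepsilon)}\,d\lambda,
\]
the number of eigenvalues of $T_t(\varepsilon,C)$ in $U_{\varepsilon_k}(\mu_{k,j}(t))$, counted with algebraic multiplicity. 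This integer depends continuously on $\varepsilon$ as long as $\Delta(\cdot,t,\varepsilon)$ has no zero on $\Gamma_{k,j}(t)$, so it is constant, giving $n_{k,j}(1)=n_{k,j}(0)$. At $\varepsilon=0$ the only eigenvalue of $T_t(C)$ in the disk is $\mu_{k,j}(t)$, whose algebraic multiplicity is $m_j$ by Proposition 1 (for even $n$ this uses $t\notin A(k,j)$, which holds on (13) because $U_{\delta_k}(A(k,j))\supset A(k,j)$), hence $n_{k,j}(0)=m_j$. Everything therefore reduces to: (i) no eigenvalue of $T_t(\varepsilon,C)$ lies on $\Gamma_{k,j}(t)$ for any $\varepsilon\in[0,1]$, and (ii) $U_{\varepsilon_k}(\mu_{k,j}(t))$ contains no other unperturbed eigenvalue. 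Since Theorem 2 places all large eigenvalues in the open disks $U_{\varepsilon_l}(\mu_{l,i}(t))$ uniformly in $\varepsilon\in[0,1]$, both follow once I prove the separation $|\mu_{k,j}(t)-\mu_{l,i}(t)|>\varepsilon_k+\varepsilon_l$ for all $(l,i)\neq(k,j)$ and $|k|\ge N$.

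For $n$ odd (Case (a)) I would establish this by size estimates. Using $\mu_{k,j}(t)=(2\pi k+t)^n+\mu_j(2\pi k+t)^{n-2}$, disks with the same $k$ and $i\neq j$ are separated by $|\mu_j-\mu_i|\,|2\pi k+t|^{n-2}\asymp|k|^{n-2}$, while for $l\neq k$ the leading difference $(2\pi k+t)^n-(2\pi l+t)^n$ gives a gap $\gtrsim|k|^{n-1}$ for neighbouring indices and larger for distant ones. As $\varepsilon_k+\varepsilon_l\asymp|k|^{(n-3)/r}\le|k|^{n-3}$ for $l$ comparable to $k$, and the gaps to far disks dominate their radii, the inequality holds for large $|k|$; since $n-3<n-2$, this is exactly the room that Proposition 1(a) provides, there being no collisions at all.

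For $n$ even (Case (b)) the gap can vanish, and the excised set $U_{\delta_k}(A(k,j))$ is designed to avoid the collisions. Because $n$ is even, $\mu_{k,j}(t)=\mu_{l,i}(t)$ for large $k$ forces $2\pi l+t\approx-(2\pi k+t)$, i.e. $t\approx0$ (then $l=-k$) or $t\approx\pi$ (then $l=-k-1$), which are the accumulation points of the exceptional set; apart from $l=k$ (handled as in the odd case) these are the only disks that can reach $U_{\varepsilon_k}(\mu_{k,j}(t))$. Putting $g(t)=\mu_{k,j}(t)-\mu_{l,i}(t)$, an expansion about a collision point $t_0$ gives $g(t_0)=0$ and $|g'(t_0)|\asymp|k|^{n-1}$ (for $l=-k$ at $t_0=0$ one finds $g'(0)=2n(2\pi k)^{n-1}+O(|k|^{n-3})$), the higher-order terms being negligible over an interval of length $\delta_k\to0$, so $|g(t)|\asymp|k|^{n-1}|t-t_0|$ near $t_0$. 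Hence for $t$ at distance $\ge\delta_k$ from $A(k,j)$ one gets $|g(t)|\gtrsim\delta_k|k|^{n-1}$, and the choice $\delta_k\asymp(\varepsilon_k+\varepsilon_{-k}+\varepsilon_{-k-1})|k|^{1-n}$ turns this into $|g(t)|\gtrsim\varepsilon_k+\varepsilon_{-k}+\varepsilon_{-k-1}\ge\varepsilon_k+\varepsilon_l$, the required separation.

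The main obstacle is precisely this separation in Case (b): one must verify that no collision other than those from $l=k,-k,-k-1$ drags a competing disk within distance $\varepsilon_k+\varepsilon_l$, control possibly tangential collisions where $g'(t_0)$ could fall below $|k|^{n-1}$, and, above all, handle the accumulation of the points $t_{l,i,q}$ at $0,\pi,2\pi$ so that excising the $\delta_k$-neighbourhoods of $A(k,j)$ genuinely keeps $|g|$ bounded below by the right quantity on all of (13). The use of $t\in[-h,2\pi-h)$ with $h\in(0,\pi)$ and the identity $T_t=T_{t+2\pi}$ serve exactly to keep these accumulation points off the endpoints of the parameter interval; making the derivative lower bound uniform across the infinitely many competing curves is the delicate quantitative step, after which the homotopy/argument-principle part (i)--(ii) is routine.
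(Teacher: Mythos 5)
Your proposal is correct and takes essentially the same route as the paper: the homotopy $T_{t}(\varepsilon,C)$, $\varepsilon\in[0,1]$, combined with Theorem 2 and exactly the paper's disk-separation estimates --- the gap $\asymp|k|^{n-2}$ for $l=k$, $i\neq j$, the gap $\asymp|k|^{n-1}$ for $l\neq k,-k,-(k+1)$, and the linearization $f(t)=(2\pi k)^{n-2}(4nk\pi t+\mu_{j}-\mu_{i})+O(k^{n-3})$ with $f'(t)\asymp k^{n-1}$, which produces the same $\delta_{k}\asymp(\varepsilon_{k}+\varepsilon_{-k}+\varepsilon_{-k-1})k^{1-n}$ as in (15)--(18). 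The only cosmetic difference is that you keep the eigenvalue count inside $D(\mu_{k,j}(t),\varepsilon_{k})$ constant via the argument principle for the characteristic determinant, where the paper invokes Kato's holomorphic-family theory; the ``delicate steps'' you flag at the end are settled exactly as you indicate, since for even $n$ the parity argument leaves only $l=-k,-(k+1)$ as genuine competitors, the linear coefficient $4nk\pi$ rules out tangential collisions, and nonreal $\mu_{i}$ yield no real collision points, so the unions in (18) run over the finitely many indices $i=1,\dots,s$.
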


\begin{proof}
$(a)$ Using (7) one can easily verify that if $\left\vert k\right\vert \geq
N$, then there exists a constant $c_{1}>0$ such that
\begin{equation}
\left\vert \mu_{k,j}(t)-\mu_{l,i}(t)\right\vert \geq c_{1}\left\vert
k\right\vert ^{n-2} \tag{14}%
\end{equation}
for all $(l,i)\neq(k,j).$ On the other hand $\varepsilon_{k}=o(k^{n-2}).$
Therefore, from Theorem 2 we obtain that the circle $D\left(  \mu
_{k,j},\varepsilon_{k}\right)  =\left\{  \lambda\in\mathbb{C}:\left\vert
\lambda-\mu_{k,j}\right\vert =\varepsilon_{k}\right\}  $ belong to the
resolvent set of the operators $T_{t}(\varepsilon,C)$ for all $\varepsilon
\in\lbrack0,1],$ where $T_{t}(\varepsilon,C)$ is generated by (5). This
implies that the operators $T_{t}:=T_{t}(1,C)$ and $T_{t}(C):=T_{t}(0,C)$ have
the same number of eigenvalues (counting the multiplicity) inside $D\left(
\mu_{k,j},\varepsilon_{k}\right)  ,$ since $T_{t}(\varepsilon,C)$ is the
halomorphic family (with respect to $\varepsilon,$ in the sense of [2] (see
[2, Chapter 7])) of operators. Thus the proof of this theorem follows from
Proposition 1$(a),$ because the operator $T_{t}(C)$ has only one eigenvalue
$\mu_{k,j}$ inside $D\left(  \mu_{k,j},\varepsilon_{k}\right)  $ and the
multiplicity of $\mu_{k,j}$ is $m_{j}.$

$(b)$ First we prove that if (13) holds, then
\begin{equation}
D\left(  \mu_{k,j}(t),\varepsilon_{k}\right)  \cap D\left(  \mu_{l,i}%
(t),\varepsilon_{l}\right)  =\varnothing\tag{15}%
\end{equation}
for $(l,i)\neq(k,j),$ where $\left\vert k\right\vert \geq N.$ It follows from
(7) that if $t\in\lbrack-h,2\pi-h),$ then $\mu_{k,j}(t)-\mu_{k,i}(t)\asymp
k^{n-2}$ for $j\neq i$ and $\left\vert \mu_{k,j}(t)-\mu_{l,i}(t)\right\vert
>d_{k}$ for $l\neq k,-k,-(k+1),$ where $d_{k}\asymp k^{n-1}.$ Thus, (15) holds
for $t\in\lbrack-h,2\pi-h)$ and $l\neq k,-k,-(k+1).$

The validity of (15) for the case $l=k$ follows from (14) and the definition
$\varepsilon_{k}.$ To prove (15) for the cases $l=-k$ and $l=-(k+1),$ let us
consider the functions $f(t)=\mu_{k,j}(t)-\mu_{-k,i}(t),$ $g(t)=\mu
_{k,j}(t)-\mu_{-k-1,i}(t).$ Using (7) and the binomial expansion of
$(a+b)^{n}$ we obtain
\[
f(t)=(2\pi k)^{n-2}(4nk\pi t+\mu_{j}-\mu_{i})+O(k^{n-3}),\text{ }f\left(
\frac{\mu_{i}-\mu_{j}}{4nk\pi}\right)  =O(k^{n-3}).
\]
On the other hand, one can easily verify that $f^{^{\prime}}(t)\asymp
k^{n-1}.$ Therefore, there exists $\delta_{k}\asymp\left(  \varepsilon
_{k}+\varepsilon_{-k}+\varepsilon_{-k-1}\right)  k^{1-n}$ such that if $t$
does not belong to the interval
\begin{equation}
U(i,j,k,\delta_{k})=\left(  \frac{\mu_{i}-\mu_{j}}{4nk\pi}-\delta_{k}%
,\frac{\mu_{i}-\mu_{j}}{4nk\pi}+\delta_{k}\right)  , \tag{16}%
\end{equation}
then $\left\vert f(t)\right\vert >\varepsilon_{k}+\varepsilon_{-k}$. In the
same way we prove that if $t$ does not belong to the interval
\begin{equation}
U(i,j,-k-1,\delta_{k})=\left(  \pi+\frac{\mu_{i}-\mu_{j}}{2\pi n(2k+n-1)}%
-\delta_{k},\pi+\frac{\mu_{i}-\mu_{j}}{2\pi n(2k+n-1)}+\delta_{k}\right)  ,
\tag{17}%
\end{equation}
then $\left\vert g(t)\right\vert >\varepsilon_{k}+\varepsilon_{-k-1}.$ Thus,
using (11) we obtain that
\begin{equation}
\left(  U_{\delta_{k}}\left(  A(k,j)\right)  \cap\lbrack-1,2\pi-1)\right)
\subset\left(
%TCIMACRO{\tbigcup \limits_{i=1}^{s}}%
%BeginExpansion
{\textstyle\bigcup\limits_{i=1}^{s}}
%EndExpansion
\left(  U(i,j,k,\delta_{k})\cup U(i,j,-k-1,\delta_{k})\right)  \right)  .
\tag{18}%
\end{equation}
Therefore, (15) is true if (13) holds.

Now, (15) with Theorem 2 implies that the circle $D\left(  \mu_{k,j}%
(t),\varepsilon_{k}\right)  $ belong to the resolvent set of the operators
$T_{t}(\varepsilon,C)$ for all $\varepsilon\in\lbrack0,1],$ if (13) holds.
Therefore instead of Proposition 1$(a)$ using Proposition 1$(b)$ and repeating
the last statements of the proof of $(a)$, we get the proof of $(b).$
\end{proof}

Now, using Theorem 3 and the following arguments we consider the gaps in
$\sigma(T)$. The substitution $y(x)=e^{itx}\widetilde{y}(x)$ implies that the
operator $T_{t}$ is generated by the differential operation
\[
(-i)^{n}\left(  \frac{\partial}{\partial x}+it\right)  ^{n}+(-i)^{n-2}%
P_{2}\left(  \frac{\partial}{\partial x}+it\right)  ^{n-2}+...+P_{n}y
\]
and the periodic boundary conditions. Then the domain of the definition of
$T_{t}$ does not depend on $t$ and hence $\{T_{t}:$ $t\in\lbrack-h,2\pi-h)\}$
is a halomorphic family (in the sense of [2] (see [2, Chapter 7])) of
operators with compact resolvent for each $h\in\lbrack0,\pi]$.

It follows from Theorem 2 and the proof of Theorem 3 that, if $\left\vert
k\right\vert \geq N$, $t_{0}\in\lbrack-h,2\pi-h)$ and $t_{0}\in\lbrack
-h,2\pi-h)\backslash U_{\delta_{k}}\left(  A(k,j)\right)  $ respectively for
odd and even $n,$ then the circle$\ D_{\varepsilon_{k}}(\mu_{k,j}(t_{0}))$
belong to the resolvent set of the operator $T_{t_{0}}$. It means that
$\Delta(\lambda,t_{0})\neq0$ for each $\lambda\in D_{\varepsilon_{k}}%
(\mu_{k,j}(t_{0})),$ where $\Delta(\lambda,t)$ is defined in (4). Since
$\Delta(\lambda,t_{0})$ is a continuous function on the compact
$D_{\varepsilon_{k}}(\mu_{k,j}(t_{0})),$ there exists $a>0$ such that
$\left\vert \Delta(\lambda,t_{0})\right\vert >a$ for all $\lambda\in
D_{\varepsilon_{k}}(\mu_{k,j}(t_{0})).$ Moreover, by (4), $\Delta(\lambda,t)$
is a polynomial of $e^{it}$ with entire coefficients. Therefore, there exists
$\delta$ such that $\left\vert \Delta(\lambda,t)\right\vert >a/2$ for all
$t\in(t_{0}-\delta,t_{0}+\delta)$ and $\lambda\in D_{\varepsilon_{k}}%
(\mu_{k,j}(t_{0})).$ It means that $D_{\varepsilon_{k}}(\mu_{k,j}(t_{0}))$
belong to the resolvent set of $T_{t}$ for all $t\in(t_{0}-\delta,t_{0}%
+\delta).$ Hence, the spectrum of $T_{t}$ is separated by $D_{\varepsilon_{k}%
}(\mu_{k,j}(t_{0}))$ into two parts in the sense of [2] (see [2, Chapter 3,
Section 6.4]). Therefore, the theory of halomorphic family of the finite
dimensional operators [2, Chapter 2] can be applied to the part of $T_{t}$ for
$t\in(t_{0}-\delta,t_{0}+\delta)$ corresponding to the inside of
$D_{\varepsilon_{k}}(\mu_{k,j}(t_{0})).$ Now, using these arguments we prove
the following lemma which plays the crucial role in the prove of the main
results of this paper.

\begin{lemma}
Suppose that the matrix $C$ has a real eigenvalue $\mu_{j}$ of odd
multiplicity $m_{j}$. If $\lambda=\mu_{k,j}(t_{0})$\textit{ for some }%
$t_{0}\in\lbrack-1,2\pi-1)$ and the disk $U_{\varepsilon_{k}}(\mu_{k,j}(t))$
for all $t\in\left[  t_{0}-h_{k},t_{0}+h_{k}\right]  $ contains only $m_{j}$
eigenvalues (counting the multiplicity) of $T_{t},$ then the spectrum of $T$
contains the point $\lambda,$ where $k\geq N$, $h_{k}=2\varepsilon_{k}%
(2\pi(k-1))^{1-n},$ $\varepsilon_{k}$ and $N$ are defined in Theorem 2.
\end{lemma}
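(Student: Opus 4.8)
The plan is to track the $m_j$ eigenvalues of $T_t$ lying in the real-centered moving disk $U_{\varepsilon_k}(\mu_{k,j}(t))$ as $t$ runs through $[t_0-h_k,t_0+h_k]$, and to detect the value $\lambda$ by an intermediate-value argument applied to the product of the distances from $\lambda$ to these eigenvalues.

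First I would record the elementary geometric facts. Since $\mu_j\in\mathbb{R}$, formula (7) shows that $\mu_{k,j}(t)$ is real for real $t$, so $\lambda=\mu_{k,j}(t_0)$ is real and every disk $U_{\varepsilon_k}(\mu_{k,j}(t))$ is centered on the real axis, hence symmetric under complex conjugation. For $k\geq N$ the derivative $\mu_{k,j}'(t)=n(2\pi k+t)^{n-1}+(n-2)\mu_j(2\pi k+t)^{n-3}$ is positive and, for $t$ in the interval, bounded below essentially by $n(2\pi(k-1))^{n-1}$. Integrating and using $h_k=2\varepsilon_k(2\pi(k-1))^{1-n}$ gives $\mu_{k,j}(t_0)-\mu_{k,j}(t_0-h_k)\geq (2n-o(1))\varepsilon_k$ and $\mu_{k,j}(t_0+h_k)-\mu_{k,j}(t_0)\geq (2n-o(1))\varepsilon_k$; since $n>2$ these increments exceed $\varepsilon_k$. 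Consequently every real point of $U_{\varepsilon_k}(\mu_{k,j}(t_0-h_k))$ is $<\lambda$ and every real point of $U_{\varepsilon_k}(\mu_{k,j}(t_0+h_k))$ is $>\lambda$.

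Next I would define $Q(t)=\prod(\lambda-\nu)$, the product over the $m_j$ eigenvalues $\nu$ of $T_t$ (counted with multiplicity) lying in $U_{\varepsilon_k}(\mu_{k,j}(t))$. By Theorem 1 the spectrum of $T_t$ is invariant under conjugation, and since the disk is symmetric about $\mathbb{R}$ the group of $m_j$ eigenvalues is itself conjugation-invariant; with $\lambda$ real this forces $\overline{Q(t)}=Q(t)$, so $Q$ is real-valued. Grouping the eigenvalues into conjugate pairs $\{\nu,\bar\nu\}$, each contributing the strictly positive factor $|\lambda-\nu|^2$, together with an odd number (because $m_j$ is odd) of real eigenvalues, the sign of $Q(t)$ equals the sign of $\prod_{\text{real }\nu}(\lambda-\nu)$. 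At $t=t_0-h_k$ all real eigenvalues are $<\lambda$, so this is a product of an odd number of positive factors and $Q(t_0-h_k)>0$; at $t=t_0+h_k$ all real eigenvalues are $>\lambda$, giving an odd number of negative factors, whence $Q(t_0+h_k)<0$.

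Finally I would invoke the holomorphic-family machinery recalled just before the lemma: because the disk contains exactly $m_j$ eigenvalues for every $t$ in the interval, no eigenvalue crosses its boundary, the boundary circle stays in the resolvent set, and the corresponding Riesz projection --- hence the compression $A(t)$ of $T_t$ to its $m_j$-dimensional range --- depends continuously (piecewise analytically) on $t$. Then $Q(t)=\det(\lambda I-A(t))$ is continuous on $[t_0-h_k,t_0+h_k]$, and the intermediate value theorem yields $t^\ast\in(t_0-h_k,t_0+h_k)$ with $Q(t^\ast)=0$, i.e. $\lambda$ is an eigenvalue of $T_{t^\ast}$, so $\lambda\in\sigma(T_{t^\ast})\subseteq\sigma(T)$. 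The main obstacle is precisely the continuity of $Q$: as the disk moves by more than its own radius one must be sure the $m_j$ eigenvalues migrate continuously without the count jumping. This is guaranteed by the lemma's hypothesis together with the separation-of-spectrum argument from Kato's theory, and patching the local fixed-contour projections over a finite cover of the interval upgrades local analyticity of $Q$ to continuity on the whole interval.
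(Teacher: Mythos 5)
Your proof is correct, and it reaches the conclusion by a genuinely different mechanism than the paper. Both arguments share the same three pillars: the derivative bound on $\mu_{k,j}(t)$ (the paper's (19)), which with $h_{k}=2\varepsilon_{k}(2\pi(k-1))^{1-n}$ forces the endpoint disks to lie strictly to the left and right of $\lambda$ (the paper's (20)--(21)); Theorem 1, giving conjugation symmetry of $\sigma(T_{t})$ with multiplicity; and Kato's theory of the separated finite-dimensional part to get continuity in $t$. Where you diverge is in how the oddness of $m_{j}$ is exploited. The paper applies Kato's Theorem 5.2 to select $m_{j}$ single-valued continuous eigenvalue branches $\lambda_{1}(t),\dots,\lambda_{m_{j}}(t)$, assumes $\lambda$ lies on none of the resulting curves, and argues topologically that each curve passing above $\lambda$ is paired by conjugation with one passing below, forcing $m_{j}$ even --- a contradiction. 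You instead work with the conjugation-invariant \emph{multiset} of the $m_{j}$ eigenvalues in the disk and the single real-valued function $Q(t)=\det(\lambda I-A(t))$, whose sign at the endpoints is determined by the (necessarily odd) number of real eigenvalues sitting on one side of $\lambda$, and conclude by the intermediate value theorem. Your route buys robustness at exactly the point where the paper is loosest: a continuous selection of branches need not respect conjugation pointwise (the conjugate of the curve $\gamma_{l}$ is generally not a single $\gamma_{s}$, and a curve joining the two half-planes while avoiding $\lambda$ can cross the line $\operatorname{Re}z=\lambda$ several times, above and below), so the paper's pairing step really requires a degree/parity formulation that your determinant argument renders unnecessary. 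What the paper's approach buys in exchange is slightly more information --- it exhibits $\lambda$ as a point on an explicit continuous eigenvalue curve $\gamma_{l}\subset\sigma(T)$ --- whereas you only produce one parameter value $t^{\ast}$ with $\lambda\in\sigma(T_{t^{\ast}})$, which is all the lemma asserts. Your closing caveat about continuity of $Q$ (patching fixed-contour Riesz projections over a finite cover, with the hypothesis preventing the count from jumping) is handled at the same level of rigor as the paper's own appeal to the discussion preceding the lemma, so no gap remains on that front.
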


\begin{proof}
It follows from (7) that
\begin{equation}
(2\pi(k-1))^{n-1}\leq\left\vert \frac{d\mu_{k,j}(t)}{dt}\right\vert \leq
(2\pi(k+2))^{n-1} \tag{19}%
\end{equation}
for all $t\in\lbrack-\pi,2\pi).$ Therefore%
\begin{equation}
\mu_{k,j}(t_{0}-h_{k})\leq\lambda-2\varepsilon_{k},\text{ }\mu_{k,j}%
(t_{0}+h_{k})\geq\lambda+2\varepsilon_{k}. \tag{20}%
\end{equation}
Denote by $\lambda_{k,1}(t),\lambda_{k,2}(t),...,\lambda_{k,m_{j}}(t)$ the
eigenvalues of $T_{t}$ lying in $U_{\varepsilon_{k}}(\mu_{k,j}(t))$ and
consider the unordered $m_{j}$-tuple $\Omega(t):=\left\{  \lambda
_{k,1}(t),\lambda_{k,2}(t),...,\lambda_{k,m_{j}}(t)\right\}  .$ As is
explained above the theory of continuos family of the finite dimensional
operators [2, Chapter 2] can be applied to the part $\Omega(t)$ of the
spectrum of $T_{t}.$ Therefore, the unordered $m_{j}$-tuple $\Omega(t)$ depend
continuously (see page 108 of [2])\ on the parameter $t\in\left[  t_{0}%
-h_{k},t_{0}+h_{k}\right]  .$ Then by Theorem 5.2 of [2] (see page 109) there
exist $p$ single-valued continuous functions $\lambda_{1}(t),\lambda
_{2}(t),...,\lambda_{m_{j}}(t)$ the value of which constitute the $m_{j}%
$-tuple $\Omega(t)$ for $t\in\left[  t_{0}-h_{k},t_{0}+h_{k}\right]  .$
Moreover, it follows from (20) and Theorem 2 that
\begin{equation}
\operatorname{Re}\lambda_{l}(t_{0}-h_{k})<\lambda-\varepsilon_{k},\text{
}\operatorname{Re}\lambda_{l}(t_{0}+h_{k})>\lambda+\varepsilon_{k} \tag{21}%
\end{equation}
for $l=1,2,...,m_{j}.$ Now, we prove that $\lambda\in\left(  \cup_{l=1}%
^{m_{j}}\gamma_{l}\right)  ,$ where $\gamma_{l}$ is the curve $\left\{
\lambda_{l}(t):t\in\left[  t_{0}-h_{k},t_{0}+h_{k}\right]  \right\}  .$ Assume
the converse. Then by (21) the continuous curves $\gamma_{l}=\left\{
\lambda_{s}(t):t\in\left[  t_{0}-h_{k},t_{0}+h_{k}\right]  \right\}  $ extend
from $\lambda_{l}(t_{0}-h_{k})\ $to $\lambda_{l}(t_{0}+h_{k})$ pas above or
below of the point $\lambda$ for each $l=1,2,...,m_{j}.$ On the other hand, by
Theorem 1 if $\gamma_{l}$ passes above of $\lambda$ then there exist
$s\in\left\{  1,2,...,m_{j}\right\}  $ such that $s\neq l$ and $\gamma_{s}$
passes below of$\ \lambda.$ It implies that the number $m_{j}$\ of the curves
$\gamma_{1},\gamma_{2},...,\gamma_{m_{j}}$ is an even number. It contradicts
to the assumption that $m_{j}$ is an odd number. Thus, there exists $l$ such
that $\lambda\in\gamma_{l}.$ Since $\gamma_{l}\subset$ $\sigma(T),$ the
theorem is proved.
\end{proof}

Now we are ready to prove the main results of this paper. First let us
consider the Case 2

\begin{theorem}
Suppose that the matrix $C$ has a real eigenvalue $\mu_{j}$ of odd
multiplicity $m_{j}.$ If $n$ is odd number, then $\sigma(T)$ contains the set
$(-\infty,-H]\cup\lbrack H,\infty)$ for some $H\geq0.$
\end{theorem}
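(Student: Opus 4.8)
The plan is to combine Theorem 3$(a)$ and Lemma 1 to deposit a whole family of real points into $\sigma(T)$, and then to exploit that, for odd $n$, the real-valued function governing those points is eventually monotone and unbounded in both directions.

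First I would record the elementary analytic facts about the curve $\mu_{k,j}(t)$. Since $\mu_j$ is real and $n$ is odd, $\mu_{k,j}(t)=(2\pi k+t)^n+\mu_j(2\pi k+t)^{n-2}$ is real for every real $t$. Writing $s=2\pi k+t$ and $\phi(s)=s^n+\mu_j s^{n-2}$, one computes $\phi'(s)=s^{n-3}(ns^2+(n-2)\mu_j)$; since $n-3$ is even this is positive for all large $|s|$, so $\phi$ is strictly increasing on $(-\infty,-S]$ and on $[S,\infty)$ for some $S>0$, with $\phi(s)\to+\infty$ as $s\to+\infty$ and $\phi(s)\to-\infty$ as $s\to-\infty$ (here the oddness of $n$ is essential).

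Second I would invoke the machinery already in place. Fix $h\in(0,\pi)$. By Theorem 3$(a)$, for $|k|\geq N$ and every $t\in[-h,2\pi-h)$ the disk $U_{\varepsilon_k}(\mu_{k,j}(t))$ contains exactly $m_j$ eigenvalues of $T_t$. Hence whenever $[t_0-h_k,t_0+h_k]\subset[-h,2\pi-h)$, i.e.\ $t_0\in[-h+h_k,\,2\pi-h-h_k)$, the hypotheses of Lemma 1 are met and $\phi(2\pi k+t_0)=\mu_{k,j}(t_0)\in\sigma(T)$. Letting $t_0$ sweep this admissible range for fixed $k\geq N$, the argument $s=2\pi k+t_0$ sweeps $[2\pi k-h+h_k,\,2\pi(k+1)-h-h_k)$, and every corresponding value $\phi(s)$ lies in $\sigma(T)$.

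Third comes the only genuinely delicate step: assembling these pieces into a full half-line. For a single $h$ the admissible $s$-intervals almost tile $[2\pi N-h,\infty)$ but leave short gaps, of length at most $h_k+h_{k+1}$, around the boundary points $s=2\pi(k+1)-h$; because $\varepsilon_k$ (and hence the $\phi$-image of such a gap) need not tend to $0$ for $n>3$, closedness of $\sigma(T)$ alone will not fill them. The remedy is to rerun the argument with a second value $h'\in(0,\pi)$, $h'\neq h$ (for instance $h=1$, $h'=2$): for large $k$ the fixed separation $|h-h'|$ exceeds $h_k+h_{k+1}$, so each former gap point $2\pi(k+1)-h$ becomes an interior admissible point of the domain $[-h',2\pi-h')$ and is covered. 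The union of the two admissible $s$-sets therefore contains an honest half-line $[S_0,\infty)$, whence $\phi([S_0,\infty))=[H,\infty)\subset\sigma(T)$ with $H=\phi(S_0)$. Applying the identical reasoning to $k\leq-N$ and using $\phi(s)\to-\infty$ gives $(-\infty,-H']\subset\sigma(T)$; enlarging $H$ yields $(-\infty,-H]\cup[H,\infty)\subset\sigma(T)$. I expect this third step to be the main obstacle: verifying that the curves supplied by Theorem 3$(a)$ and Lemma 1 leave no macroscopic gap in the real axis. The reality and monotonicity of $\phi$ and the bookkeeping $s=2\pi k+t$ are routine; the care lies entirely at the endpoints of the Bloch-parameter interval, where the freedom to vary $h$ is precisely what rescues the argument.
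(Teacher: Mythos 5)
Your proposal is correct and follows essentially the same route as the paper: every large real $\lambda$ is realized as $\mu_{k,j}(t_{0})$ for some $k$ with $\left\vert k\right\vert \geq N$ and $t_{0}\in\lbrack-1,2\pi-1)$, and Theorem 3$(a)$ together with Lemma 1 places it in $\sigma(T)$, the only delicacy being the choice of $h\in(0,\pi)$ so that $\left[  t_{0}-h_{k},t_{0}+h_{k}\right]  \subset\lbrack-h,2\pi-h)$. The paper resolves that endpoint issue by choosing $h$ pointwise, after $\lambda$ and $t_{0}$ are fixed, which is exactly the freedom your two-values-of-$h$ sweep exploits; your version merely makes explicit (via the monotonicity and surjectivity of $\phi(s)=s^{n}+\mu_{j}s^{n-2}$ for odd $n$, and the estimate $h_{k}\rightarrow0$) the covering step the paper dismisses with ``it is clear that there exists $h$''.
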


\begin{proof}
Let $\lambda$ be large real number. Without loss of generality assume that
$\lambda>\mu_{N,j}(-1),$ where $\mu_{k,j}(t)$ and $N$ are defined in (7) and
Theorem 2, respectively. Then there exists $t_{0}\in\lbrack-1,2\pi-1)$ such
that $\lambda=\mu_{k,j}(t_{0}))$\textit{ }for some $k\geq N$. It is clear that
there exists $h\in(0,\pi)$ such that $\left[  t_{0}-h_{k},t_{0}+h_{k}\right]
\subset\lbrack-h,2\pi-h).$ Therefore, it follows from Theorem 3$(a)$ that the
conditions of Lemma 1 holds. Then $\lambda\in\sigma(T)$ and the theorem is proved.
\end{proof}

Now, we study the complicated Case 3. Consider the intervals $U(i,j,k,\delta
_{k}+h_{k})$ and $U(i,j,-k-1,\delta_{k}+h_{k})$ for $i\in\left\{
1,2,...,s\right\}  ,$ where these intervals are obtained from the intervals
(16) and (17) by replacing $\delta_{k}$ with $\delta_{k}+h_{k}.$ By
definitions of $\delta_{k}$, $h_{k}$ and $\varepsilon_{k}$ we have
\begin{equation}
\delta_{k}+h_{k}=o(\left\vert k\right\vert ^{-1}). \tag{22}%
\end{equation}
On the other hand, it follows from (16) and (17) that there exists $c_{2}$
such that distance between the centres of these intervals is greater than
$c_{2}\left\vert k\right\vert ^{-1}.$ Therefore, if $\left\vert k\right\vert
\geq N$, then these intervals are pairwise disjoint and%
\[
\mu\left(
%TCIMACRO{\tbigcup \limits_{i\in\left\{  1,2,...,s\right\}  }}%
%BeginExpansion
{\textstyle\bigcup\limits_{i\in\left\{  1,2,...,s\right\}  }}
%EndExpansion
\left(  U(i,j,k,\delta_{k}+h_{k})\cup U(i,j,-k-1,\delta_{k}+h_{k})\right)
\right)  =o(\left\vert k\right\vert ^{-1}),
\]
where $\mu(E)$ is the measure of the set $E.$ If we eliminate these intervals
from $[-1,2\pi-1)$ then the remaining set consists of the pairwise disjoint
intervals $[a_{1,k},b_{1,k}],[a_{2,k},b_{2,k}],...,[a_{l,k},b_{l,k}]$ such
that
\begin{equation}
\mu\left(  \lbrack a_{1,k},b_{1,k}]\cup\lbrack a_{2,k},b_{2,k}]\cup
...\cup\lbrack a_{l,k},b_{l,k}]\right)  =2\pi+o(\left\vert k\right\vert
^{-1}). \tag{23}%
\end{equation}

\begin{theorem}
Suppose that the matrix $C$ has a real eigenvalue $\mu_{j}$ of odd
multiplicity $m_{j}$ and $n$ is an even number. If $t_{0}\in\lbrack
a_{v,k},b_{v,k}]$ for some $v\in\left\{  1,2,...,l\right\}  $ and $k\geq N$,
then $\mu_{k,j}(t_{0})\in\sigma(T).$\textit{ Moreover, }%
\begin{equation}
\mu\left(  \mu_{k,j}([-1,2\pi-1))\cap\sigma(T)\right)  =\mu\left(  \mu
_{k,j}([-1,2\pi-1))\right)  (1+o(\left\vert k\right\vert ^{-1}).\tag{24}%
\end{equation}
In other words, the spectrum $\sigma(T)$ contains the set%
\begin{equation}
\mu_{k,j}\left(  [a_{1,k},b_{1,k}]\cup\lbrack a_{2,k},b_{2,k}]\cup
...\cup\lbrack a_{l,k},b_{l,k}]\right)  \tag{25}%
\end{equation}
for $k\geq N$ and $\sigma(T)$ contains the large part of the interval
$[0,\infty),$ in the sense that \
\begin{equation}
\lim_{\rho\rightarrow\infty}\frac{\mu([0,\rho]\backslash\sigma(T))}{\mu
(\sigma(T)\cap\lbrack0,\rho])}=0.\tag{26}%
\end{equation}

\end{theorem}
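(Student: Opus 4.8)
The plan is to verify the hypotheses of Lemma 1 for $\lambda=\mu_{k,j}(t_{0})$ when $t_{0}\in[a_{v,k},b_{v,k}]$. The only nontrivial hypothesis is that the disk $U_{\varepsilon_{k}}(\mu_{k,j}(t))$ contains exactly $m_{j}$ eigenvalues of $T_{t}$ for every $t$ in the window $[t_{0}-h_{k},t_{0}+h_{k}]$; by Theorem 3$(b)$ this holds as soon as the whole window avoids $U_{\delta_{k}}(A(k,j))$. This is exactly where deleting the enlarged intervals $U(i,j,k,\delta_{k}+h_{k})$ and $U(i,j,-k-1,\delta_{k}+h_{k})$ (rather than the bare $\delta_{k}$-neighborhoods) pays off: since $t_{0}$ lies in the complement $[a_{v,k},b_{v,k}]$, it is at distance at least $\delta_{k}+h_{k}$ from each center in (16)--(17), so every $t$ with $|t-t_{0}|\leq h_{k}$ stays at distance $\geq\delta_{k}$ from those centers, hence outside $\bigcup_{i}\left(U(i,j,k,\delta_{k})\cup U(i,j,-k-1,\delta_{k})\right)$, which by (18) contains $U_{\delta_{k}}(A(k,j))\cap[-1,2\pi-1)$. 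For $k$ large the window lies in $[-h,2\pi-h)$ for a suitable $h\in(0,\pi)$, so Theorem 3$(b)$ indeed gives exactly $m_{j}$ eigenvalues throughout the window. Lemma 1 then yields $\mu_{k,j}(t_{0})\in\sigma(T)$, and since $t_{0}$ was an arbitrary point of $\bigcup_{v}[a_{v,k},b_{v,k}]$, the inclusion (25) is immediate.

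\textbf{Measure on one band.} For (24) I would transport the estimate (23) from the $t$-line to the $\lambda$-line via the map $\mu_{k,j}$. For $k\geq N$ this map is strictly monotone on $[-1,2\pi-1)$ with $|\mu_{k,j}'(t)|\asymp k^{n-1}$ by (19); hence it carries the deleted set, of measure $o(|k|^{-1})$ by (23), onto a set of measure $O(k^{n-1})\cdot o(|k|^{-1})=o(k^{n-2})$, while it carries all of $[-1,2\pi-1)$ onto an interval of length $\asymp k^{n-1}$. Since the image of $\bigcup_{v}[a_{v,k},b_{v,k}]$ lies in $\sigma(T)$ by (25), the fraction of $\mu_{k,j}([-1,2\pi-1))$ missed by $\sigma(T)$ is at most $o(k^{n-2})/\Theta(k^{n-1})=o(|k|^{-1})$, which is (24).

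\textbf{Global density.} The structural fact driving (26) is that the bands tile the half-line: from (7) one reads off $\mu_{k,j}(2\pi-1)=\mu_{k+1,j}(-1)$, so the intervals $\mu_{k,j}([-1,2\pi-1))$ for $k=N,N+1,\dots$ are consecutive and cover $[\mu_{N,j}(-1),\infty)$. Given $\rho$, all but a fixed bounded piece of $[0,\rho]$ is covered by the bands with $N\leq k\leq K$, where $K\asymp\rho^{1/n}$. On each band the part omitted by $\sigma(T)$ has measure $O(k^{n-2})$ by the previous step, while the covered part has measure $\asymp k^{n-1}$. Summing, $\mu([0,\rho]\setminus\sigma(T))\leq O(1)+\sum_{k=N}^{K}O(k^{n-2})=O(K^{n-1})$, whereas $\mu(\sigma(T)\cap[0,\rho])\geq\sum_{k=N}^{K-1}\Theta(k^{n-1})=\Theta(K^{n})$, so the ratio in (26) is $O(K^{-1})=O(\rho^{-1/n})\to0$.

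\textbf{Main obstacle.} I expect the delicate point to be the first step: making rigorous that deleting the $h_{k}$-enlarged exceptional intervals precisely compensates for the width $h_{k}$ of the window required by Lemma 1, and in particular confirming through (18) that no resonance $\mu_{l,i}(t)$ with $l\neq k,-k,-k-1$ can slip into the disk as $t$ sweeps across the window (those being controlled by the separation $d_{k}\asymp k^{n-1}$ already recorded in the proof of Theorem 3$(b)$). The three measure statements (24)--(26) are then essentially bookkeeping, resting only on the derivative bound (19) and the tiling identity $\mu_{k,j}(2\pi-1)=\mu_{k+1,j}(-1)$.
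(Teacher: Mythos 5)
Your proposal is correct and follows essentially the same route as the paper: the pointwise step (window $[t_{0}-h_{k},t_{0}+h_{k}]$ avoids the $\delta_{k}$-intervals because $t_{0}$ avoids the $(\delta_{k}+h_{k})$-enlarged ones, then (18), Theorem 3$(b)$ and Lemma 1) is exactly the paper's argument. Your measure computations for (24) via the derivative bound (19) and for (26) via the tiling identity $\mu_{k,j}(2\pi-1)=\mu_{k+1,j}(-1)$ correctly carry out the bookkeeping that the paper compresses into ``using (7) one can easily verify that (24) follows from (23) and (26) follows from (24).''
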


\begin{proof}
By the definition of the interval $[a_{v,k},b_{v,k}]$ if $t_{0}\in\lbrack
a_{v,k},b_{v,k}]$ then $t_{0}$ does not belong to any of the intervals
$U(i,j,k,\delta_{k}+h_{k})$ and $U(i,j,-k-1,\delta_{k}+h_{k}).$ It means that
$\left[  t_{0}-h_{k},t_{0}+h_{k}\right]  $ has no common points with the
intervals in (16) and (17). Therefore, using (18) we obtain that there exists
$h\in(0,\pi)$ such that$\left[  t_{0}-h_{k},t_{0}+h_{k}\right]  \subset
\lbrack-h,2\pi-h)\backslash U_{\delta_{k}}\left(  A(k,j)\right)  .$ Then by
Theorem 3$(b)$ the conditions of Lemma 1 holds. Thus $\mu_{k,j}(t_{0}%
)\in\sigma(T)$ for each $t_{0}\in\lbrack a_{v,k},b_{v,k}]$ and $v=1,2,...,l.$
It means that the set (25) belong to the spectrum. Therefore, using (7) one
can easily verify that (24) follows from (23) and (26) follows from (24).
\end{proof}

Now to prove the main result for Case 3 we use the following consequence of
Theorem 5.

\begin{corollary}
Suppose that the matrix $C$ has a real eigenvalue $\mu_{j}$ of odd
multiplicity $m_{j}$ and $n$ is an even number. Then \ there exist $H>0$ and
$\gamma_{k}=o(k^{n-2})$ such that the gaps of $\sigma(T)$ lying in
$[H,\infty)$ are contained in the union of the interval $S(2k,i,j)$ and
$S(2k+1,i,j)$ for $i=1,2,...,s$ and $\left\vert k\right\vert >N,$ where
\[
S(l,i,j)=\left(  \left(  \pi l\right)  ^{n}+\frac{\mu_{i}+\mu_{j}}{2}\left(
\pi l\right)  ^{n-2}-\gamma_{l},\left(  \pi l\right)  ^{n}+\frac{\mu_{i}%
+\mu_{j}}{2}\left(  \pi l\right)  ^{n-2}+\gamma_{l}\right)  .
\]

\end{corollary}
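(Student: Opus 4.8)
The plan is to extract the gaps of $\sigma(T)$ directly from Theorem 5 and then to locate them in the $\lambda$-variable by applying the map $\mu_{k,j}$ of (7) to the $t$-intervals that were deleted in the construction of the $[a_{v,k},b_{v,k}]$.

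First I would set $H=\mu_{N,j}(-1)$ and observe that the bands $\mu_{k,j}([-1,2\pi-1))$, $k\geq N$, tile $[H,\infty)$: since $\mu_{k,j}(t)$ depends only on $2\pi k+t$ and $\mu_{k,j}(2\pi-1)=\mu_{k+1,j}(-1)$, consecutive bands abut, and for even $n$ the map is increasing in its argument once $k\geq N$. Hence every gap of $\sigma(T)$ in $[H,\infty)$ lies inside one band. By Theorem 5 the set (25) lies in $\sigma(T)$, so within the band indexed by $k$ each gap is contained in the $\mu_{k,j}$-image of the set deleted from $[-1,2\pi-1)$ to form $\cup_v[a_{v,k},b_{v,k}]$, namely
\[
\mu_{k,j}\Big(\bigcup_{i=1}^{s}\big(U(i,j,k,\delta_k+h_k)\cup U(i,j,-k-1,\delta_k+h_k)\big)\Big).
\]

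The core computation is to place these images. Expanding (7) by the binomial theorem at the centre $t^{*}=\tfrac{\mu_i-\mu_j}{4nk\pi}$ of $U(i,j,k,\delta_k+h_k)$ (see (16)), the linear term combines with $\mu_j(2\pi k)^{n-2}$ to give, writing $2\pi k=\pi(2k)$,
\[
\mu_{k,j}(t^{*})=(\pi(2k))^{n}+\tfrac{\mu_i+\mu_j}{2}(\pi(2k))^{n-2}+O(k^{n-3}),
\]
which is the centre of $S(2k,i,j)$ up to $O(k^{n-3})$. The same expansion at $t^{**}=\pi+\tfrac{\mu_i-\mu_j}{2\pi n(2k+n-1)}$ (see (17)), using $2\pi k+t^{**}=\pi(2k+1)+\tfrac{\mu_i-\mu_j}{2\pi n(2k+n-1)}$, yields
\[
\mu_{k,j}(t^{**})=(\pi(2k+1))^{n}+\tfrac{\mu_i+\mu_j}{2}(\pi(2k+1))^{n-2}+O(k^{n-3}),
\]
the centre of $S(2k+1,i,j)$ up to $O(k^{n-3})$.

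It then remains to bound the widths and collect the errors into $\gamma_l$. By the mean value theorem and (19), each removed interval, of length $2(\delta_k+h_k)$, has image of length at most $(2\pi(k+2))^{n-1}\cdot 2(\delta_k+h_k)$, which by (22) is $o(k^{n-2})$. Adding the $O(k^{n-3})$ displacement of the centres and choosing $\gamma_l:=o(l^{n-2})$ to dominate both contributions shows the two images lie in $S(2k,i,j)$ and $S(2k+1,i,j)$ respectively. Finally, as $k$ runs over $k\geq N$ the indices $2k,2k+1$ exhaust all integers $\geq 2N$, and since $n$ is even the powers $(\pi l)^n,(\pi l)^{n-2}$ depend only on $|l|$, so $S(-l,i,j)=S(l,i,j)$; after enlarging $N$ and $H$ this matches the statement for $|k|>N$. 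The main obstacle is precisely the bookkeeping of these two binomial expansions — showing the image centres agree with the centres of $S(2k,i,j)$ and $S(2k+1,i,j)$ through the two leading orders $(\pi l)^n$ and $\tfrac{\mu_i+\mu_j}{2}(\pi l)^{n-2}$, with every remainder falling below the $o(k^{n-2})$ threshold that $\gamma_l$ absorbs.
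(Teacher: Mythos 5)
Your proposal is correct and follows essentially the same route as the paper's proof: you confine the gaps to the $\mu_{k,j}$-images of the removed intervals $U(i,j,k,\delta_k+h_k)$ and $U(i,j,-k-1,\delta_k+h_k)$ via Theorem 5, then use the binomial expansion of (7) together with (19) and (22) to place those images inside $S(2k,i,j)$ and $S(2k+1,i,j)$. In fact you carry out explicitly the expansions at the interval centres that the paper dismisses with ``one can easily verify,'' so your write-up is a fleshed-out version of the same argument.
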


\begin{proof}
By definition, the set $[a_{1,k},b_{1,k}]\cup\lbrack a_{2,k},b_{2,k}%
]\cup...\cup\lbrack a_{l,k},b_{l,k}]$ is
\[
\lbrack-1,2\pi-1)\backslash\left(
%TCIMACRO{\tbigcup \limits_{i=1}^{s}}%
%BeginExpansion
{\textstyle\bigcup\limits_{i=1}^{s}}
%EndExpansion
\left(  U(i,j,k,\delta_{k}+h_{k})\cup U(i,j,-k-1,\delta_{k}+h_{k})\right)
\right)  .
\]
On the other hand, by Theorem 5 the image (25) of this set belong to
$\sigma(T).$ Therefore, using (7) one can easily conclude that there exists
$H>0$ such that the gaps of $\sigma(T)$ lying in $[H,\infty)$ is contained in%
\[
\mu_{k,j}\left(
%TCIMACRO{\tbigcup \limits_{i=1}^{s}}%
%BeginExpansion
{\textstyle\bigcup\limits_{i=1}^{s}}
%EndExpansion
\left(  U(i,j,k,\delta_{k}+h_{k})\cup U(i,j,-k-1,\delta_{k}+h_{k})\right)
\right)
\]
for $\left\vert k\right\vert >N.$ Moreover, using (7), (19) and (22) one can
easily verify that there exists $\gamma_{k}=o(k^{n-2})$ such that
\[
\mu_{k,j}\left(
%TCIMACRO{\tbigcup \limits_{i=1}^{s}}%
%BeginExpansion
{\textstyle\bigcup\limits_{i=1}^{s}}
%EndExpansion
U(i,j,k,\delta_{k}+h_{k})\right)  \subset S(2k,i,j)
\]
and
\[
\mu_{k,j}\left(
%TCIMACRO{\tbigcup \limits_{i=1}^{s}}%
%BeginExpansion
{\textstyle\bigcup\limits_{i=1}^{s}}
%EndExpansion
U(i,j,-k-1,\delta_{k}+h_{k})\right)  \subset S(2k+1,i,j).
\]
These inclusions give the proof of the corollary.
\end{proof}

Now we are ready to prove the main result of this paper for the Case 3.

\begin{theorem}
If the matrix $C$ has at least three real eigenvalues $\mu_{j_{1}},\mu_{j_{2}%
},\mu_{j_{3}}$ of odd multiplicity such that
\begin{equation}
\min_{i_{1},i_{2},i_{3}}\left(  diam(\{\mu_{j_{1}}+\mu_{i_{1}},\mu_{j_{2}}%
+\mu_{i_{2}},\mu_{j_{3}}+\mu_{i_{3}}\})\right)  \neq0, \tag{27}%
\end{equation}
where minimum is taken under condition $i_{j}\in\left\{  1,2,...,s\right\}  $
for $j=1,2,3$ and
\[
diam(E)=\sup_{x,y\in E}\mid x-y\mid,
\]
then there exists a number $H$ such that $[H,\infty)\subset\sigma(L)$.
\end{theorem}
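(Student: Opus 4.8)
The plan is to apply Corollary 1 separately to each of the three eigenvalues $\mu_{j_1},\mu_{j_2},\mu_{j_3}$ and then to intersect the three resulting descriptions of the gaps. For each $a\in\{1,2,3\}$, Corollary 1 applied to $\mu_{j_a}$ produces $H_a>0$ and $\gamma_l^{(a)}=o(l^{n-2})$ such that every gap of $\sigma(T)$ contained in $[H_a,\infty)$ lies in $\bigcup_{i=1}^{s}\bigcup_{|k|>N}\left(S(2k,i,j_a)\cup S(2k+1,i,j_a)\right)$, where $S(l,i,j_a)$ is the interval of radius $\gamma_l^{(a)}$ centred at $(\pi l)^n+\frac{1}{2}(\mu_i+\mu_{j_a})(\pi l)^{n-2}$. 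Setting $H_0=\max\{H_1,H_2,H_3\}$, every gap of $\sigma(T)$ lying in $[H_0,\infty)$ must lie in the intersection of these three unions, so it suffices to show that this intersection has no points above some level $H\ge H_0$.

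First I would localise by the leading term. The centre of every $S(l,i,j_a)$ equals $(\pi l)^n+O(l^{n-2})$, and since $n$ is even this leading term depends only on $|l|$; for distinct magnitudes the leading terms are separated by $\asymp|l|^{n-1}$, which dominates both the $O(l^{n-2})$ corrections and the radii. Hence, above a sufficiently high level, a point of the triple intersection must belong to $S(l,i_1,j_1)\cap S(l,i_2,j_2)\cap S(l,i_3,j_3)$ for a single value of $(\pi l)^n$ and some $i_1,i_2,i_3\in\{1,2,\dots,s\}$. The difference of the centres of $S(l,i_a,j_a)$ and $S(l,i_b,j_b)$ equals $\frac{1}{2}\left((\mu_{i_a}+\mu_{j_a})-(\mu_{i_b}+\mu_{j_b})\right)(\pi l)^{n-2}$, while the combined radius $\gamma_l^{(a)}+\gamma_l^{(b)}$ is $o(l^{n-2})$; thus two of these intervals meet for large $|l|$ only when $\mu_{i_a}+\mu_{j_a}=\mu_{i_b}+\mu_{j_b}$.

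Now hypothesis (27) enters decisively. It asserts that for every choice of $i_1,i_2,i_3\in\{1,2,\dots,s\}$ the three numbers $\mu_{j_1}+\mu_{i_1},\mu_{j_2}+\mu_{i_2},\mu_{j_3}+\mu_{i_3}$ are not all equal; as there are only $s^3$ such triples, the minimal diameter $d_0$ over them is a fixed positive constant. Consequently, for every admissible triple at least two of the corresponding centres differ by at least $\frac{1}{2}d_0(\pi l)^{n-2}$, which exceeds the combined radius $o(l^{n-2})$ once $|l|$ is large. Hence for all large $|l|$ and all $(i_1,i_2,i_3)$ at least one pairwise intersection is empty, so $S(l,i_1,j_1)\cap S(l,i_2,j_2)\cap S(l,i_3,j_3)=\varnothing$; the bound is uniform because only finitely many triples occur. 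Therefore the intersection of the three covers contains no point above some $H\ge H_0$, so $\sigma(T)$ has no gap in $[H,\infty)$, that is $[H,\infty)\subset\sigma(T)$.

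I expect the main obstacle to be the uniform domination in the second and third steps: one must be sure that the $o(l^{n-2})$ radii arising from three separate applications of Corollary 1 are genuinely beaten by the $\asymp l^{n-2}$ centre-separation forced by (27), simultaneously for all finitely many index triples and for both parities of $l$. Once this uniformity is secured the conclusion is immediate. This also clarifies why three eigenvalues are needed rather than two: a single pair could satisfy $\mu_{j_1}+\mu_{i_1}=\mu_{j_2}+\mu_{i_2}$ for some $i_1,i_2$, leaving the two covers overlapping and a genuine gap possible, whereas (27) with three eigenvalues forces an empty common intersection.
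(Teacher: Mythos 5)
Your proposal is correct and takes essentially the same route as the paper: apply Corollary 1 to each of $\mu_{j_1},\mu_{j_2},\mu_{j_3}$ and use (27) to show that the three gap-covers have empty intersection above some level $H$, whence $[H,\infty)\subset\sigma(T)$. Your explicit localisation to a common index $l$ (via the $\asymp|l|^{n-1}$ separation of the leading terms $(\pi l)^{n}$, using that $n$ is even) and the uniformity over the finitely many triples $(i_1,i_2,i_3)$ merely make precise steps the paper's proof treats implicitly.
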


\begin{proof}
By Corollary 1 the gaps lie in each of the following three sets
\[%
%TCIMACRO{\tbigcup \limits_{i=1,2,...,s;\left\vert k\right\vert >N}}%
%BeginExpansion
{\textstyle\bigcup\limits_{i=1,2,...,s;\left\vert k\right\vert >N}}
%EndExpansion
\left(  S(2k,i,j_{u})\cup S(2k+1,i,j_{u})\right)
\]
for $u=1,2,3.$ Therefore, to prove the theorem it is enough to show that
\ these sets have no common points. If they have a common point $x,$ then
using the definitions of these set we obtain that there exist $\left\vert
k\right\vert \geq N;$ $l\in\left\{  2k,2k+1\right\}  $ and $i_{u}\in\left\{
1,2,...,s\right\}  $ such that
\[
\mid x-(\pi l)^{n}-\frac{\mu_{j_{u}}+\mu_{i_{u}}}{2}\left(  \pi l\right)
^{n-2}\mid<\beta_{l}%
\]
for all $u=1,2,3$, where $\beta_{l}=o(l^{n-2}).$ This inequality implies that
$\mu_{j_{1}}+\mu_{i_{1}}=\mu_{j_{2}}+\mu_{i_{2}}=\mu_{j_{3}}+\mu_{i_{3}}$
which contradicts (27). The theorem is proved.
\end{proof}

\begin{remark}
Using (6) and Theorem 1 we obtain

\textbf{(a) }If $m$ is an odd number, then the matrix $C$ has a real
eigenvalues of odd multiplicity.

\textbf{(b) }If $m$ is an even number, then the number of real eigenvalues of
odd multiplicity of the matrix $C$ is an even number.
\end{remark}

\section{Appendix}

In this section we give the proof of Theorem 2. For this we prove that if
$\lambda(t,\varepsilon)$ is an eigenvalue of the operator $T_{t}%
(\varepsilon,C)$ satisfying the inequality
\begin{equation}
|\lambda(t,\varepsilon)-\mu_{k,j}(t)|\leq|\lambda(t,\varepsilon)-\mu
_{d,u}(t)|\tag{28}%
\end{equation}
for all $\left(  d,u\right)  \neq(k,j)$, then
\begin{equation}
\lambda(t,\varepsilon)\in U_{\varepsilon_{k}}(\mu_{k,j}(t)),\tag{29}%
\end{equation}
where $\left\vert k\right\vert \geq N$ and $N$ is defined in Theorem 2. We use
the following notations and formulas. Let $A(k,t)$ be $\left\{  k\right\}  ,$
if $n$ is an odd number. When $n$ is an even number, then let $A(k,t)$ be
$\left\{  k\right\}  ,$ $\left\{  \pm k\right\}  $ and $\left\{
k,-k-1\right\}  $ respectively, if \ $t\in\left(  \lbrack1,\pi-1]\cup
\lbrack\pi+1,2\pi-1)\right)  ,$ $t\in\lbrack-1,1)$ and $t\in(\pi-1,\pi+1).$
Using the obvious inequality $\left\vert a^{n}-b^{n}\right\vert \geq\left\vert
a-b\right\vert \left\vert a^{n-1}+b^{n-1}\right\vert $ for $a>0$ and $b>0,$
one can easily verify that if $\left\vert k\right\vert \geq N$ and $d\notin
A(k,t),$ then
\[
\left\vert \left(  2\pi k+t\right)  ^{n}-\left(  2\pi d+t\right)
^{n}\right\vert \geq\pi\left\vert \left\vert k\right\vert -\left\vert
d\right\vert \right\vert \left(  \left\vert k\right\vert ^{n-1}+\left\vert
d\right\vert ^{n-1}\right)  .
\]
Therefore it follows from (7) and (28) we obtain
\begin{equation}
\left\vert \lambda(t,\varepsilon)-\left(  2\pi d+t\right)  ^{n}\right\vert
>\left\vert \left\vert k\right\vert -\left\vert d\right\vert \right\vert
\left(  \left\vert k\right\vert ^{n-1}+\left\vert d\right\vert ^{n-1}\right)
\tag{30}%
\end{equation}
for all $d\notin A(k,t),$ $t\in\lbrack-1,2\pi-1)$\textit{ }and $\varepsilon
\in\lbrack0,1].$

The eigenfunctions $\Phi_{d,u,l,t}^{\ast}(x)$ and associated functions
$\Phi_{d,u,l,q,t}^{\ast}(x)$ of $\left(  T_{t}(C)\right)  ^{\ast}$
corresponding to the eigenvalue $\overline{\mu_{d,u}(t)}$ are
\begin{equation}
\Phi_{d,u,l,t}^{\ast}(x)=v_{u,l}^{\ast}e^{i\left(  2\pi d+t\right)  x},\text{
}\Phi_{d,u,l,q,t}^{\ast}(x)=v_{u,l,q}^{\ast}e^{i\left(  2\pi d+t\right)  x},
\tag{31}%
\end{equation}
where $v_{u,l}^{\ast}$ and $v_{u,l,q}^{\ast}$ are the eigenvector and
associated vector of \ $C^{\ast}$ corresponding to $\overline{\mu_{u}},$
$l=1,2,...,l_{u\text{ }}$ and $q=1,2,...,r_{u,l}-1$ (see (8) and (10)). In the
other words,
\begin{equation}
(\left(  T_{t}(C)\right)  ^{\ast}-\overline{\mu_{d,u}(t)}I)\Phi_{d,u,l,t}%
^{\ast}=0 \tag{32}%
\end{equation}
and
\begin{equation}
\text{ }(\left(  T_{t}(C)\right)  ^{\ast}-\overline{\mu_{d,u}(t)}%
I)\Phi_{d,u,l,q,t}^{\ast}=\Phi_{d,u,l,q-1,t}^{\ast}, \tag{33}%
\end{equation}
where $\Phi_{d,u,l,0,t}^{\ast}(x)=\Phi_{d,u,l,t}^{\ast}(x)$. Let
$\Psi_{\lambda(t,\varepsilon)}$ be a normalized eigenfunction of
$T_{t}(\varepsilon,C)$ corresponding to the eigenvalue $\lambda(t,\varepsilon
).$ Multiplying both sides of
\begin{equation}
T_{t}(\varepsilon,C)\Psi_{\lambda(t,\varepsilon)}=\lambda(t,\varepsilon
)\Psi_{\lambda(t,\varepsilon)} \tag{34}%
\end{equation}
by $\Phi_{d,u,l,t}^{\ast}(x),$ using $T_{t}(\varepsilon,C)=T_{t}%
(C)+(T_{t}(\varepsilon,C)-T_{t}(C))$ and (32), we get%
\[
(\lambda(t,\varepsilon)-\mu_{d,u}(t))(\Psi_{\lambda(t,\varepsilon)}%
,\Phi_{d,u,l,t}^{\ast})=(T_{t}(\varepsilon,C)-T_{t}(C))\Psi_{\lambda
(t,\varepsilon)},\Phi_{d,u,l,t}^{\ast}).
\]
Similarly, multiplying (34) by $\Phi_{d,u,l,1,t}^{\ast}$ and using (33) for
$q=1$ we obtain
\[
\left(  \lambda(t,\varepsilon)-\mu_{d,u}(t)\right)  (\Psi_{\lambda
(t,\varepsilon)},\Phi_{d,u,l,1,t}^{\ast})=
\]%
\[
(T_{t}(\varepsilon,C)-T_{t}(C))\Psi_{\lambda(t,\varepsilon)},\Phi
_{d,u,l,1,t}^{\ast})+(\Psi_{\lambda(t,\varepsilon)},\Phi_{d,u,l,t}^{\ast}).
\]
Now, using the last two equalities, one can easily verify that%
\[
(\lambda(t,\varepsilon)-\mu_{d,u})^{2}(\Psi_{\lambda(t,\varepsilon)}%
,\Phi_{d,u,l,1,t}^{\ast})=
\]%
\[
(\lambda(t,\varepsilon)-\mu_{d,u})((T_{t}(\varepsilon,C)-T_{t}(C))\Psi
_{\lambda(t,\varepsilon)},\Phi_{d,u,l,1,t}^{\ast})+
\]%
\[
((T_{t}(\varepsilon,C)-T_{t}(C))\Psi_{\lambda(t,\varepsilon)},\Phi
_{d,u,l,t}^{\ast}).
\]
In this way one can deduce the formulas
\begin{equation}
(\lambda(t,\varepsilon)-\mu_{d,u}(t))^{q+1}(\Psi_{\lambda(t,\varepsilon)}%
,\Phi_{d,u,l,q,t}^{\ast})= \tag{35}%
\end{equation}%
\[
\sum_{p=0}^{q}(\lambda(t,\varepsilon)-\mu_{d,u})^{p}((T_{t}(\varepsilon
,C)-T_{t}(C))\Psi_{\lambda(t,\varepsilon)},\Phi_{d,u,l,p,t}^{\ast}).
\]

To prove (29) we estimate the terms of (35).

\begin{lemma}
If $n$ is an even number, then there exists $d\in A(k,t)$ such that
\begin{equation}
\left\vert (\Psi_{\lambda(t,\varepsilon)},\Phi_{d,u,l,q,t}^{\ast})\right\vert
\geq c_{3} \tag{36}%
\end{equation}
for some $u,l,q$ and
\begin{equation}
\left\vert ((T_{t}(\varepsilon,C)-T_{t}(C))\Psi_{\lambda(t,\varepsilon)}%
,\Phi_{d,u,l,p,t}^{\ast})\right\vert \leq c_{4}\left(  \frac{1}{|k|}%
+q_{k}\right)  \left\vert k\right\vert ^{n-2} \tag{37}%
\end{equation}
for all $u,l,p$, where $\left\vert k\right\vert \geq N$, $N$ and $q_{k}$ are
defined in Theorem 2. If $n$ is an odd number, then (36) and (37) hold for
$d=k$ and $q_{k}=0.$
\end{lemma}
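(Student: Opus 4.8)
The plan is to expand the normalized eigenfunction $\Psi_{\lambda(t,\varepsilon)}$ in the Riesz basis formed by the root functions $\Phi_{d,u,l,q,t}$ of $T_{t}(C)$, whose biorthogonal dual system is $\Phi_{d,u,l,q,t}^{\ast}$, and to show that this expansion is concentrated on the indices $d\in A(k,t)$. Estimate (36) then follows from this concentration together with the normalization $\|\Psi_{\lambda(t,\varepsilon)}\|=1$, while estimate (37) follows from a direct computation of the perturbation term in Fourier coordinates.

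First I would record an a priori bound. From the eigenvalue equation (34) the principal term $(-i)^{n}\Psi^{(n)}$ equals $\lambda\Psi$ minus terms of order at most $n-2$; combined with the multiplicative interpolation inequality $\|\Psi^{(n-2)}\|\leq\|\Psi^{(n)}\|^{(n-2)/n}\|\Psi\|^{2/n}$ and with $\lambda\asymp\mu_{k,j}(t)\asymp|k|^{n}$, this yields $\|\Psi^{(n)}\|\lesssim|k|^{n}$ and hence $\|\Psi^{(j)}\|\lesssim|k|^{j}$ for $j\leq n$. Since the perturbation $T_{t}(\varepsilon,C)-T_{t}(C)$ has order $n-2$, this gives the crude bound $|((T_{t}(\varepsilon,C)-T_{t}(C))\Psi,\Phi_{d,u,l,p,t}^{\ast})|\lesssim|k|^{n-2}$ for every index.

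Next I would feed this crude bound into the chain of identities (35). For $d\notin A(k,t)$ the separation estimate (30), corrected by the subleading term $\mu_{u}(2\pi d+t)^{n-2}$ of $\mu_{d,u}(t)$, forces $|\lambda(t,\varepsilon)-\mu_{d,u}(t)|\gtrsim||k|-|d||\,(|k|^{n-1}+|d|^{n-1})$, which dominates the numerator $|k|^{n-2}$; solving (35) for the coefficient then gives $|(\Psi,\Phi_{d,u,l,q,t}^{\ast})|\lesssim|k|^{n-2}/|\lambda-\mu_{d,u}|$, and summing the squares over $d\notin A(k,t)$ shows this tail is $O(|k|^{-2})=o(1)$. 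Because $\{\Phi_{d,u,l,q,t}\}$ is a Riesz basis with constants uniform in $t$ and $\|\Psi\|=1$, the full sum of squared coefficients is $\asymp1$; hence the finitely many coefficients with $d\in A(k,t)$ carry a definite fraction of the norm, so at least one of them has modulus $\geq c_{3}$, which is (36). For odd $n$ this forces $d=k$, since then $A(k,t)=\{k\}$.

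Finally, for (37) I would evaluate $((T_{t}(\varepsilon,C)-T_{t}(C))\Psi,\Phi_{d,u,l,p,t}^{\ast})$ with $d\in A(k,t)$ by passing to Fourier coefficients. The lower-order summands $\sum_{l\geq3}(-i)^{n-l}P_{l}\Psi^{(n-l)}$ have order at most $n-3$, so by the a priori bound they contribute $\lesssim|k|^{n-3}=|k|^{n-2}/|k|$, producing the $1/|k|$ term. In the principal summand $(-i)^{n-2}(P_{2}-C)\Psi^{(n-2)}$ the vanishing $\widehat{(P_{2}-C)}(0)=0$ kills the diagonal frequency, so the couplings between frequencies inside $A(k,t)$ use only the Fourier matrices of $P_{2}$ at the differences $\pm2k,\pm(2k+1)$, whose entries are bounded by $q_{k}$, times the derivative factor $|k|^{n-2}$. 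The main obstacle is the remaining tail: the couplings from frequencies $d^{\prime}\notin A(k,t)$ carry the large derivative factors $(2\pi d^{\prime}+t)^{n-2}$, and to control them I would apply Cauchy--Schwarz, using the coefficient decay from the previous step to bound the part of $\|\Psi^{(n-2)}\|$ supported on $d^{\prime}\notin A(k,t)$ by $|k|^{n-3}$ and pairing it against the $\ell_{2}$ Fourier coefficients of $P_{2}-C$; this shows the tail is also $O(|k|^{n-3})$ and hence is absorbed into the $1/|k|$ term. Collecting the three contributions gives (37).
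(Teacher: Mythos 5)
Your proposal is correct, and for estimate (37) it is essentially the paper's own proof: the same splitting of the perturbation into the zero-mean principal part $(P_{2}-C)\Psi^{(n-2)}_{\lambda(t,\varepsilon)}$ plus lower-order terms contributing $O(|k|^{n-3})$, the same observation that the only couplings among frequencies in $A(k,t)$ involve the Fourier matrices of $P_{2}$ at the differences $\pm2k,\pm(2k+1)$ (yielding the $q_{k}|k|^{n-2}$ term, which is absent for odd $n$ since then $A(k,t)\setminus\{d\}=\varnothing$), and the same Cauchy--Schwarz treatment of the off-$A(k,t)$ tail, which the paper carries out as the sums $S_{1}$ and $S_{2}$ in (42)--(45) and which gives exactly your $O(|k|^{n-3})$ bound. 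For (36) your mechanics differ from the paper's: you expand $\Psi_{\lambda(t,\varepsilon)}$ in the Riesz basis of root functions of $T_{t}(C)$ and obtain tail decay of the coefficients by feeding the crude bound $O(|k|^{n-2})$ into the chain identities (35) together with the separation (30); the paper instead works in the orthonormal exponential system (39), multiplies (34) by $\varphi_{d,s,t}$ to get the one-line identity (38), bounds the tail via Bessel's inequality and the a priori derivative bound (40) (which it quotes from [9, Lemma 2] and which you re-derive by an interpolation bootstrap), obtains $\sum_{d\in A(k,t),\,s}|(\Psi_{\lambda(t,\varepsilon)},\varphi_{d,s,t})|^{2}\geq 1/2$ by Parseval, and only then passes to the pairings with $\Phi^{\ast}_{d,u,l,q,t}$ using that the root vectors of $C^{\ast}$ form a basis of $\mathbb{C}^{m}$. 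The paper's route buys simplicity: Bessel and Parseval are free in an orthonormal system, whereas your route must justify that $\{\Phi_{d,u,l,q,t}\}$ is a Riesz basis with $t$-uniform constants and that $\{\Phi^{\ast}_{d,u,l,q,t}\}$ is its biorthogonal family --- the latter holds only after normalizing and reversing the Jordan chains of $C^{\ast}$, since $\langle u_{j,l,q},v^{\ast}_{j,l,q'}\rangle$ depends on $q+q'$; because this is a fixed invertible recombination within each block, independent of $d$ and $t$, your concentration argument survives, but the claim as you state it is imprecise. One shared blemish worth flagging: for even $n$ and $d=-k\notin A(k,t)$ the factor $\bigl\vert\,|k|-|d|\,\bigr\vert$ in (30) vanishes, so the separation you (and the paper) invoke really comes from the $t$-window in the definition of $A(k,t)$, which forces $\vert(2\pi k+t)^{n}-(2\pi k-t)^{n}\vert\gtrsim t\,|k|^{n-1}\gtrsim|k|^{n-1}$; this near-resonant index must be handled by that observation, not by the literal statement of (30). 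With that repair, both routes rest on the same two quantitative inputs --- the separation (30) and the bound (40) --- so the difference is one of packaging rather than of ideas.
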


\begin{proof}
To prove the lemma we use the following formula
\begin{equation}
\left(  \lambda(t,\varepsilon)-\left(  2\pi d+t\right)  ^{n}\right)  \left(
\Psi_{\lambda(t,\varepsilon)},\varphi_{d,s,t}\right)  =%
%TCIMACRO{\tsum \limits_{\nu=2}^{n}}%
%BeginExpansion
{\textstyle\sum\limits_{\nu=2}^{n}}
%EndExpansion
(-i)^{n-v}(P_{\nu}\Psi_{\lambda(t,\varepsilon)}^{\left(  n-\nu\right)
},\varphi_{d,s,t})\tag{38}%
\end{equation}
which can be obtained from (34) by multiplying by $\varphi_{d,s,t}%
(x)=:e_{s}e^{i\left(  2\pi d+t\right)  x}$ and using the equality
$T_{t}\left(  0\right)  \varphi_{d,s,t}=\left(  2\pi d+t\right)  ^{n}%
\varphi_{d,s,t},$ where $e_{1},e_{2},...,e_{m}$ is a standard basis of
$\mathbb{C}^{m},$ $T_{t}(0)$ is the operator generated by the expression
$(-i)^{n}y^{(n)}$ and the boundary conditions (3). Using (38), (30) and Bessel
inequality for the orthonormal system
\begin{equation}
\left\{  \varphi_{d,s,t}(x)=:e_{s}e^{i\left(  2\pi d+t\right)  x}%
:d\in\mathbb{Z},\text{ }s=1,2,...,m\right\}  \tag{39}%
\end{equation}
we obtain
\[
\sum\limits_{d\in\left(  \mathbb{Z}\backslash A(k,t)\right)  ,\text{
}s=1,2,...,m}\left\vert \left(  \Psi_{\lambda(t,\varepsilon)},\varphi
_{d,s,t}\right)  \right\vert ^{2}\leq\frac{\left\Vert
%TCIMACRO{\tsum \limits_{\nu=2}^{n}}%
%BeginExpansion
{\textstyle\sum\limits_{\nu=2}^{n}}
%EndExpansion
(-i)^{n-v}P_{\nu}\Psi_{\lambda(t,\varepsilon)}^{\left(  n-\nu\right)
}\right\Vert ^{2}}{k^{2n-2}}.
\]
On the other hand, in [9] (see Lemma 2 of [9]) we proved that there exists
$c_{5}$ such that
\begin{equation}
\left\vert \Psi_{\lambda(t,\varepsilon)}^{(\nu)}(x)\right\vert \leq
c_{5}\left\vert k\right\vert ^{\nu}\tag{40}%
\end{equation}
for all $x\in\lbrack0,1],t\in\lbrack-1,2\pi-1)$ and $\varepsilon\in
\lbrack0,1].$ Therefore we have
\[
\sum\limits_{d\in\left(  \mathbb{Z}\backslash A(k,t)\right)  ,s=1,2,...,m}%
\left\vert \left(  \Psi_{\lambda(t,\varepsilon)},\varphi_{d,s,t}\right)
\right\vert ^{2}\leq\frac{c_{6}}{k^{2}}.
\]
Then by the Parsevals equality
\[
\sum\limits_{d\in A(k,t),s=1,2,...,m}\left\vert \left(  \Psi_{\lambda
(t,\varepsilon)},\varphi_{d,s,t}\right)  \right\vert ^{2}\geq\frac{1}{2}.
\]
Since the system of the root vectors of the matrix $C^{\ast}$ is a basis of
$\mathbb{C}^{m}$, (31) and the last inequality imply that (36) holds for some
$u,l,q$ and $d\in A(k,t).$ If $n$ is an odd number, then $A(k,t)=\left\{
k\right\}  $ and hence (36) holds for $d=k.$

Now we prove (37). By the definitions of $T_{t}(\varepsilon,C)$ and $T_{t}(C)$
we have
\begin{align}
((T_{t}(\varepsilon,C)-T_{t}(C))\Psi_{\lambda(t,\varepsilon)},\Phi
_{d,u,l,p,t}^{\ast}) &  =((P_{2}-C)\Psi_{\lambda(t,\varepsilon)}^{(n-2)}%
,\Phi_{d,u,l,p,t}^{\ast})+\tag{41}\\
&
%TCIMACRO{\tsum \limits_{v=3}^{n}}%
%BeginExpansion
{\textstyle\sum\limits_{v=3}^{n}}
%EndExpansion
(P_{v}\Psi_{\lambda(t,\varepsilon)}^{\left(  n-\nu\right)  },\Phi
_{d,u,l,p,t}^{\ast}).\nonumber
\end{align}
Using (40) and (31) we obtain that
\[
\left\vert
%TCIMACRO{\tsum \limits_{v=3}^{n}}%
%BeginExpansion
{\textstyle\sum\limits_{v=3}^{n}}
%EndExpansion
(P_{v}\Psi_{\lambda(t,\varepsilon)}^{\left(  n-\nu\right)  },\Phi
_{d,u,l,p,t}^{\ast})\right\vert \leq c_{7}\left\vert k\right\vert ^{n-3}.
\]
By (31) to estimate the first term in the right side of (41) it is enough to
prove that
\[
\left\vert ((P_{2}-C)\Psi_{\lambda(t,\varepsilon)}^{(n-2)},\varphi
_{d,i,t})\right\vert \leq c_{8}q_{k}\left\vert k\right\vert ^{n-2}%
\]
for $i=1,2,...,m.$ Using the decomposition
\[
\Psi_{\lambda(t,\varepsilon)}^{(n-2)}=\sum\limits_{l\in\mathbb{Z},\text{
}s=1,2,...,m}\left(  \Psi_{\lambda(t,\varepsilon)}^{(n-2)},\varphi
_{l,s,t}\right)  \varphi_{l,s,t}%
\]
of $\Psi_{\lambda(t,\varepsilon)}^{(n-2)}$ by the orthonormal basis (39) we
obtain.
\begin{equation}
((P_{2}-C)\Psi_{\lambda(t,\varepsilon)}^{(n-2)},\varphi_{d,i,t})=\sum
\limits_{l\in\left(  \mathbb{Z}\backslash\left\{  d\right\}  \right)  ,\text{
}s=1,2,...,m}p_{2,i,s,d-l}\left(  \Psi_{\lambda(t,\varepsilon)}^{(n-2)}%
,\varphi_{l,s,t}\right)  .\tag{42}%
\end{equation}
The right-hand side of (42) is the sum of
\[
S_{1}=:\sum_{l\in A(k,t)\backslash\left\{  d\right\}  ;\text{ }s=1,2,...,m}%
p_{2,i,s,d-l}\left(  \Psi_{\lambda(t,\varepsilon)}^{(n-2)},\varphi
_{l,s,t}\right)
\]
and%
\[
S_{2}=:\sum\limits_{l\in\left(  \mathbb{Z}\backslash A(k,t)\right)  ;\text{
}s=1,2,...,m}p_{2,i,s,d-l}\left(  \Psi_{\lambda(t,\varepsilon)}^{(n-2)}%
,\varphi_{l,s,t}\right)  .
\]
First, let us estimate $S_{1}.$ It follows from the definition of $A(k,t)$
that the set $A(k,t)\backslash\left\{  d\right\}  $ for $d\in A(k,t)$ consists
of at most one number. Moreover, it follows from the definition of $A(k,t)$
that $(d-l)\in\left\{  \pm2k,\pm(2k+1),\right\}  $ for all $d\in A(k,t)$ and
$l\in\left(  A(k,t)\backslash\left\{  d\right\}  \right)  .$ Therefore, using
(40) and the definition of $q_{k}$ we obtain
\begin{equation}
\left\vert S_{1}\right\vert \leq c_{9}q_{k}\left\vert k\right\vert
^{n-2}.\tag{43}%
\end{equation}
It remains to estimate $S_{2}.$ Using the Schwards inequality for the space
$l_{2}$ and the integration by parts formula and the last relation in (2) we
obtain
\begin{equation}
\left\vert S_{2}\right\vert ^{2}\leq c_{10}\sum\limits_{l\in\left(
\mathbb{Z}\backslash A(k,t)\right)  ;\text{ }s=1,2,...,m}|l|^{2n-4}\left\vert
\left(  \Psi_{\lambda(t,\varepsilon)}^{{}},\varphi_{l,s,t}\right)  \right\vert
^{2}.\tag{44}%
\end{equation}
Now, first use (38) and then (30)\ in (44) to conclude that%
\[
\left\vert S_{2}\right\vert ^{2}\leq\sum\limits_{\substack{l\in\left(
\mathbb{Z}\backslash A(k,t)\right)  ,\\s=1,2,...,m}}\frac{c_{11}|l|^{2n-4}%
}{\left\vert \left\vert k\right\vert -\left\vert l\right\vert \right\vert
^{2}\left(  \left\vert k\right\vert ^{n-1}+\left\vert l\right\vert
^{n-1}\right)  ^{2}}\left\vert
%TCIMACRO{\tsum \limits_{v=2}^{n}}%
%BeginExpansion
{\textstyle\sum\limits_{v=2}^{n}}
%EndExpansion
(-i)^{n-v}(P_{v}\Psi_{\lambda(t,\varepsilon)}^{\left(  n-v\right)  }%
,\varphi_{l,s,t})\right\vert ^{2}.
\]
Finally, using (40) in the last inequality we get
\[
\left\vert S_{2}\right\vert ^{2}\leq\sum\limits_{l\in\left(  \mathbb{Z}%
\backslash A(k,t)\right)  ,\text{ }s=1,2,...,m}\frac{c_{12}|l|^{2n-4}%
|k|^{2n-4}}{\left\vert \left\vert k\right\vert -\left\vert l\right\vert
\right\vert ^{2}\left(  \left\vert k\right\vert ^{n-1}+\left\vert l\right\vert
^{n-1}\right)  ^{2}}%
\]
from which by direct calculations we obtain
\begin{equation}
\left\vert S_{2}\right\vert ^{2}\leq c_{13}|k|^{2n-6},\text{ }\left\vert
S_{2}\right\vert \leq c_{14}|k|^{n-3}.\tag{45}%
\end{equation}
Thus (37) follows from (43) and (45). It is clear that if $n$ is an odd
number, then $A(k,t)\backslash\left\{  d\right\}  $ is an empty set for $d\in
A(k,t).$ Therefore $S_{1}=0$ and in (37) the term $q_{k}$ does not appear.
\end{proof}

Now we are ready to prove Theorem 2. Dividing (35) by $(\Psi_{\lambda
(t,\varepsilon)},\Phi_{d,u,l,q,t}^{\ast}),$ and using (36) and (37) we obtain
\[
(\lambda(t,\varepsilon)-\mu_{d,u}(t))^{q+1}=\sum_{p=0}^{q}(\lambda
(t,\varepsilon)-\mu_{d,u}(t))^{p}\left\vert k\right\vert ^{n-2}O\left(
\frac{1}{|k|}+q_{k}\right)  ,
\]
where $q+1\leq r$ and $r$ is defined in Theorem 2. From the last equality we
obtain that $\lambda(t,\varepsilon)\in U_{\varepsilon_{k}}(\mu_{d,u}(t)).$
This inclusion with (28) implies (29) which gives the proof of Theorem 2.

\end{document}